\declaretheoremstyle[spaceabove=12pt plus 2pt minus 4pt ,spacebelow=12pt plus 2pt minus 4pt,bodyfont=\normalfont]{scdef}
\declaretheoremstyle[spaceabove=12pt plus 2pt minus 4pt,spacebelow=8pt plus 2pt minus 4pt,bodyfont=\itshape]{scthm} 
\declaretheoremstyle[spaceabove=\topsep,spacebelow=12pt plus 2pt minus 4pt,bodyfont=\normalfont,headfont=\itshape,notefont=\itshape,postheadspace=0.3em,qed=\qedsymbol,headformat=\NAME\NOTE,notebraces=\relax]{scpf}
\declaretheorem[numbered=no,style=scthm,name=Theorem, refname={Theorem,Theorems}, Refname={Theorem,Theorems}] {theorem*}
\declaretheorem[style=scthm,numberwithin=section,name=Theorem, refname={Theorem,Theorems}, Refname={Theorem,Theorems}] {theorem} 
\declaretheorem[style=scthm,sharenumber=theorem, name=Lemma, refname={Lemma,Lemmas}, Refname={Lemma,Lemma}] {lemma} 
\declaretheorem[style=scthm,sharenumber=theorem, name=Corollary, refname={Corollary,Corollaries}, Refname={Corollary,Corollaries}] {corollary}
\declaretheorem[style=scdef,sharenumber=theorem, name=Remark, refname={Remark,Remarks}, Refname={Remark,Remarks}] {remark}
\declaretheorem[style=scdef, sharenumber=theorem, name=Definition, refname={Definition,Definitions}, Refname={Definition,Definitions}]{definition}
\declaretheorem[style=scdef, sharenumber=theorem, name=Proposition, refname={Proposition,Propositions}, Refname={Proposition,Propositions}]{proposition}
\declaretheorem[style=scdef, sharenumber=theorem, name=Corollary \& Definition, refname={Corollary \& Definition,Definitions}, Refname={Corollary \& Definition,Definitions}]{cordef}
\newcommand{\Aut}{\mathrm{Aut}}
\newcommand{\mf}{\mathfrak}
\newcommand{\mc}{\mathcal}
\newcommand{\R}{\mathbb{R}}
\newcommand{\id}{\mathrm{id}}
\newcommand\extalg{%
  \newlength{\len}%
  \settoheight{\len}{V}%
  \mathbin{%
    \resizebox{0.93\len}{0.93\len}{$\wedge$}%
    \kern-0.1em%
  }}%
\newcommand{\intprod}{\mathbin{\hbox to 0.7ex{%
      \kern-0.3ex
      \vrule height0.0777ex width0.971ex depth0ex
      \kern-0.055ex
      \vrule height1.165ex width0.0777ex depth0ex\hss}}%
}%
  \DeclareMathOperator{\tr}{tr}
  \DeclareMathOperator{\rk}{rk}
  \DeclareMathOperator{\im}{im}
  \DeclareMathOperator{\Hess}{Hess}
  \DeclareMathOperator{\Ric}{Ric}
  \DeclareMathOperator{\Div}{div}
  \DeclareMathOperator{\End}{End}
  \DeclareMathOperator{\codim}{codim}
\title{A Bochner Technique For Foliations With Non-Negative Transverse Ricci Curvature}
\author{Leon Roschig}
\date{}
\begin{document}

\maketitle

\begin{abstract}
\begin{center} \textbf{Abstract}\end{center}
We generalize the Bochner technique to foliations with non-negative transverse Ricci curvature. In particular, we obtain a new vanishing theorem for basic cohomology. Subsequently, we provide two natural applications, namely to degenerate 3-$(\alpha,\delta) $-Sasaki and certain Sasaki-$ \eta $-Einstein manifolds, which arise for example as Boothby-Wang bundles over hyperkähler and Calabi-Yau manifolds, respectively.
\end{abstract}

\tableofcontents

\section{Introduction}

The Bochner technique is a highly acclaimed method of proof in classical differential geometry, which is attributed to \textsc{Bochner} \cite{Boch} and sometimes also \textsc{Yano} \cite{YB}. It discusses how Ricci curvature affects the types of vector fields that a manifold admits. A modern introduction to the topic can be found in \cite[Chapter 7]{Pete}, which also served as inspiration for the generalization in this article.

Let $ (M,g) $ be a connected closed oriented Riemannian manifold and let $ \Ric $ denote the Ricci curvature tensor of $ (M,g) $. The Bochner technique culminates in the following two theorems \cite[Theorems 36 \& 48]{Pete}:

\begin{theorem} \label{negBoch}
If $ \Ric $ is negative semi-definite everywhere, then every Killing vector field is parallel. If additionally $ \Ric $ is negative definite at one point, then no non-trivial Killing vector fields exist.
\end{theorem}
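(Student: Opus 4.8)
The plan is to run the classical integral Bochner argument built on the Weitzenböck identity $\nabla^*\nabla X = \Ric(X)$, valid for every Killing vector field $X$, where $\nabla^*\nabla$ denotes the non-negative connection (rough) Laplacian. Granting this, I would pair with $X$ and integrate over the closed manifold $M$; since $\int_M g(\nabla^*\nabla X, X) = \int_M |\nabla X|^2$ by definition of the formal adjoint $\nabla^*$, this yields the master identity
\[
\int_M |\nabla X|^2 = \int_M \Ric(X,X).
\]
Both assertions of the theorem are then extracted from this single equation together with the sign hypotheses on $\Ric$.

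The substantive step is the pointwise identity, and I expect it to be the main obstacle, since everything else is formal. Starting from the Killing equation $\nabla_a X_b + \nabla_b X_a = 0$ --- equivalently, the skew-symmetry of the endomorphism $\nabla X$ --- I would apply the Ricci commutation rule to the second covariant derivatives of $X$ and use the first Bianchi identity to express $\nabla^2 X$ algebraically in terms of the curvature tensor acting on $X$. Tracing this formula collapses the curvature term to the Ricci tensor and produces $\nabla^*\nabla X = \Ric(X)$. The delicate part is the bookkeeping of index symmetries and sign conventions, so that the trace lands on $\Ric$ with exactly the sign displayed above; a useful consistency check is that $\Div X = \tr \nabla X = 0$, so that the integration by parts carries no boundary-type defect on the closed $M$.

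With the master identity in hand the conclusions are immediate. If $\Ric$ is negative semi-definite everywhere, then its right-hand side is $\le 0$ while the left-hand side is $\ge 0$; hence both vanish and $\nabla X \equiv 0$, i.e.\ $X$ is parallel, which is the first statement. For the second, observe that the integrand $|\nabla X|^2 - \Ric(X,X)$ is pointwise non-negative and has zero integral, so it vanishes identically; in particular $\Ric(X,X) \equiv 0$. If $\Ric$ is negative definite at a point $p$, then $\Ric(X,X)(p) = 0$ forces $X(p) = 0$, and since $X$ is parallel $|X|$ is constant, whence $X \equiv 0$. Thus no non-trivial Killing field can exist, completing the proof.
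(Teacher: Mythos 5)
Your argument is correct: the Weitzenböck identity $\nabla^*\nabla X = \Ric(X)$ does hold for Killing fields with the sign you state, the adjoint pairing is valid on a closed manifold, and both conclusions follow from your master identity exactly as you describe (including the pointwise vanishing of $\Ric(X,X)$ and the constancy of $|X|$ for a parallel field). It is, however, organized differently from the proof the paper relies on --- the paper does not prove this theorem itself but cites Petersen, and both Petersen's proof and the paper's own generalization to foliations run through the Bochner function $f = \frac{1}{2}|X|^2$ rather than through $\nabla^*$. There one computes, for $X$ Killing,
\[ \Hess f(Y,Y) = |\nabla_Y X|^2 - R(Y,X,X,Y) \, , \qquad \Delta f = |\nabla X|^2 - \Ric(X,X) \, , \]
and integrates $\Delta f$ over the closed manifold via the divergence theorem. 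After integration this yields the same identity $\int_M |\nabla X|^2 = \int_M \Ric(X,X)$ as your pairing, so the two routes are equivalent at the integral level; the difference is where the pointwise information sits. The Hessian computation produces a pointwise elliptic statement about $f$, which admits maximum-principle variants (useful when $M$ is not compact) and, more relevantly here, is exactly the form that transplants to the transverse setting: the paper's \autoref{harmeq} and \autoref{Bochner} imitate it with $f = \frac{1}{2}\, g_T(X,X)$, $\Hess_T$ and $\Delta_T$, because the only integration tool available there is the transverse divergence theorem (\autoref{div}), not an adjointness statement for a transverse rough Laplacian. Your route is the quicker one on a closed Riemannian manifold, but it is purely integral and leans on the formal adjoint $\nabla^*$, which is the ingredient that does not carry over as directly to the foliated generalization that is the subject of the paper.
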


\begin{theorem} \label{posBoch}
If $ \Ric $ is positive semi-definite everywhere, then every harmonic vector field (i.e.~the $ g $-dual one-form is harmonic) is parallel. If additionally $ \Ric $ is positive definite at one point, then no non-trivial harmonic vector fields exist.
\end{theorem}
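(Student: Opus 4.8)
The plan is to run the classical argument through the Bochner–Weitzenböck formula, which for one-forms reads
\[
\Delta \omega = \nabla^*\nabla \omega + \Ric(\omega),
\]
where $\Delta = \dd\dd^* + \dd^*\dd$ is the Hodge Laplacian, $\nabla^*\nabla$ the connection Laplacian, and $\Ric(\omega)$ denotes the one-form $v \mapsto \Ric(\omega^\sharp, v)$. First I would derive this identity by computing both Laplacians in a local orthonormal frame, commuting the covariant derivatives, and observing that the resulting curvature terms contract to the Ricci tensor (with the sign convention that makes $\Ric$ enter positively). This is the technical core and the step I expect to be most delicate, since it requires carefully tracking the full curvature tensor and the sign conventions until everything collapses to the Ricci term.

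Next, let $X$ be a harmonic vector field and set $\omega := X^\flat$, so that $\Delta\omega = 0$ by hypothesis. Pairing the Weitzenböck identity pointwise with $\omega$ and integrating over the closed manifold $M$, the left-hand side vanishes, while integration by parts — legitimate since $M$ is closed and hence boundaryless, so that $\nabla^*$ is the genuine adjoint of $\nabla$ without boundary contributions — converts $\int_M \langle \nabla^*\nabla\omega, \omega\rangle$ into $\int_M \lvert \nabla\omega\rvert^2$. This produces the integral identity
\[
0 = \int_M \lvert \nabla\omega\rvert^2 + \int_M \Ric(\omega^\sharp, \omega^\sharp).
\]

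Finally, I would exploit positivity. If $\Ric$ is positive semi-definite everywhere, both integrands are pointwise non-negative, so each integral must vanish separately; in particular $\nabla\omega \equiv 0$, i.e.~$\omega$ (equivalently $X$) is parallel. For the sharper statement, assume $\Ric$ is positive definite at a point $p$. A parallel one-form has constant pointwise norm, so $\lvert\omega\rvert$ is constant on $M$; but the vanishing of $\Ric(\omega^\sharp,\omega^\sharp)$ at $p$, combined with positive-definiteness there, forces $\omega_p = 0$, whence $\lvert\omega\rvert \equiv 0$ and $X \equiv 0$. The only genuine obstacle is thus the Weitzenböck formula itself; once it is in hand with the correct sign, the pointwise positivity argument together with the closedness of $M$ does the rest. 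I note that this is the exact mirror of \Cref{negBoch}: there the relevant zeroth-order term for Killing fields enters with the opposite sign, which is why negative curvature plays the analogous role.
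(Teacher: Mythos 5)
Your proof is correct, but it takes a genuinely different route from the one the paper relies on. You go through the Weitzenböck identity $\Delta\omega = \nabla^*\nabla\omega + \Ric(\omega^\sharp,\cdot)$ for one-forms, pair with $\omega$, and integrate by parts on the closed manifold. The paper takes this statement from Petersen's book, and the approach it actually follows (visible in the proof of its generalization, \autoref{Bochner}, via \autoref{harmeq}) works instead with the scalar function $f = \frac{1}{2}\,g(X,X)$: harmonicity of $\omega_X$ is first unpacked into the two pointwise conditions that $\nabla X$ is $g$-symmetric (this is $d\omega_X = 0$, cf.\ \autoref{symm}) and $\Div X = \tr\nabla X = 0$ (this is $\delta\omega_X = 0$, cf.\ \autoref{codif}); from these one computes the pointwise Bochner formula $\Delta f = |\nabla X|^2 + \Ric(X,X)$, and integrating $\Delta f$ over the closed manifold with the divergence theorem yields exactly the integral identity you obtain. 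The endgame is identical in both routes: both non-negative integrands vanish separately, and at a point of definiteness the constancy of $|X|$ for parallel $X$ forces $X \equiv 0$. As for what each approach buys: yours is the more systematic one, since once the Weitzenböck formula is in hand it extends verbatim to $p$-forms and to vanishing theorems for higher Betti numbers, but the formula itself is the expensive step, exactly as you flag. The paper's scalar route needs only a Hessian computation plus $\int_M \Delta f \cdot \mu = 0$, and this economy is precisely what makes it transplantable to the foliated setting: there one has a transverse divergence theorem (\autoref{div}) and a transverse Hessian identity (\autoref{harmeq}), whereas a Weitzenböck formula for the basic Laplacian $\Delta_B$ would be considerably heavier to establish --- note the paper's warning that $\Delta_B$ is not even the restriction of the ordinary Laplacian $\Delta$ to basic forms.
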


By combining these results with the famous Hodge theorem, which states that the space of harmonic one-forms is isomorphic to the first cohomology group, we obtain the following interesting consequences:

\begin{corollary} \label{van}
If $ \Ric $ is positive semi-definite everywhere and positive definite at one point, then the first Betti number $ b_1(M) = 0 $.
\end{corollary}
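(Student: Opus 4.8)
The plan is simply to feed \Cref{posBoch} into the Hodge theorem, exactly as indicated in the sentence preceding the statement; this corollary is an immediate consequence and requires no new computation.

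I would begin by recalling the Hodge theorem for the connected closed oriented Riemannian manifold $(M,g)$: the space $\mc{H}^1(M)$ of harmonic one-forms is isomorphic to the first de Rham cohomology group, so that $b_1(M) = \dim_\R \mc{H}^1(M)$. Thus it suffices to prove $\mc{H}^1(M) = \{0\}$. To pass from forms to vector fields, I would use the metric: by definition a vector field is harmonic precisely when its $g$-dual one-form is harmonic, so the musical isomorphism induced by $g$ identifies $\mc{H}^1(M)$ with the space of harmonic vector fields. Under the present hypotheses---$\Ric$ positive semi-definite everywhere and positive definite at one point---\Cref{posBoch} guarantees that the only harmonic vector field is the trivial one. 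Hence $\mc{H}^1(M) = \{0\}$ and $b_1(M) = 0$.

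Since the argument only chains together two results whose hypotheses are already met by $(M,g)$, there is no genuine obstacle here; the substantive content lives entirely in \Cref{posBoch}. The only point worth a moment's care is the standing regularity assumption on $(M,g)$ (connected, closed, oriented), which is needed both for the validity of the Hodge theorem and for the Bochner vanishing, and which holds by hypothesis.
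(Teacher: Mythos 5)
Your proposal is correct and follows exactly the route the paper indicates: the Hodge theorem identifies $ H^1(M) $ with the space of harmonic one-forms, the metric duality turns these into harmonic vector fields, and \Cref{posBoch} forces all of them to vanish under the stated curvature hypotheses. This matches the paper's own (implicit) derivation of \Cref{van}, so nothing further is needed.
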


\begin{corollary} \label{flat}
If $ \Ric $ vanishes everywhere, then the dimension of the isometry group of $ (M,g) $ is equal to $ b_1(M) $.
\end{corollary}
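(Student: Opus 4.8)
The plan is to establish a chain of vector-space isomorphisms connecting the Lie algebra of the isometry group to the first de Rham cohomology, invoking \emph{both} Bochner theorems simultaneously (legitimate here because $\Ric = 0$ is at once positive and negative semi-definite) together with the Hodge theorem.

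First I would recall that, by the Myers--Steenrod theorem, $\Isom(M,g)$ is a Lie group whose Lie algebra is (anti-)isomorphic to the space $\mathcal{K}$ of Killing vector fields on $M$; in particular $\dim \Isom(M,g) = \dim_\R \mathcal{K}$. Thus it suffices to show $\dim_\R \mathcal{K} = b_1(M)$. Next I would identify $\mathcal{K}$ with the space $\mathcal{P}$ of parallel vector fields: since $\Ric = 0$ is in particular negative semi-definite, \Cref{negBoch} shows every Killing field is parallel, i.e.\ $\mathcal{K} \subseteq \mathcal{P}$, while the reverse inclusion is elementary, since $\nabla X = 0$ forces $(\mathcal{L}_X g)(Y,Z) = g(\nabla_Y X, Z) + g(Y, \nabla_Z X) = 0$, so every parallel field is Killing. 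Hence $\mathcal{K} = \mathcal{P}$.

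Then I would pass to one-forms. The metric duality $X \mapsto X^\flat$ is a linear isomorphism, and because $\nabla$ is metric it intertwines $\nabla X$ with $\nabla(X^\flat)$; thus it restricts to an isomorphism between $\mathcal{P}$ and the space $\mathcal{P}'$ of parallel one-forms. A parallel one-form is automatically closed and coclosed, hence harmonic, so $\mathcal{P}' \subseteq \mathcal{H}^1$, the space of harmonic one-forms. Conversely, since $\Ric = 0$ is positive semi-definite, \Cref{posBoch} gives $\mathcal{H}^1 \subseteq \mathcal{P}'$, whence $\mathcal{P}' = \mathcal{H}^1$. Finally, the Hodge theorem identifies $\mathcal{H}^1 \cong H^1(M;\R)$, whose dimension is $b_1(M)$. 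Assembling the equalities and isomorphisms yields $\dim\Isom(M,g) = \dim_\R \mathcal{K} = \dim_\R \mathcal{P} = \dim_\R \mathcal{P}' = \dim_\R \mathcal{H}^1 = b_1(M)$.

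The computational content here is essentially nil; the only non-formal ingredient is the identification $\dim\Isom(M,g) = \dim_\R \mathcal{K}$, which I expect to be the point requiring the most care. It relies on the closedness of $M$, so that Killing fields are complete and integrate to one-parameter subgroups of isometries, as guaranteed by Myers--Steenrod. Everything else is a purely linear-algebraic splicing of the two Bochner theorems with Hodge theory.
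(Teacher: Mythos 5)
Your proposal is correct and follows exactly the route the paper intends: since $\Ric = 0$ is simultaneously negative and positive semi-definite, \autoref{negBoch} and \autoref{posBoch} identify Killing fields, parallel fields, and harmonic fields (the elementary converse inclusions being as you state), and the Hodge theorem then gives $\dim\Isom(M,g) = \dim\mathcal{H}^1 = b_1(M)$. This is precisely the ``combining these results with the famous Hodge theorem'' argument the paper leaves implicit, and it mirrors the paper's own proof of the foliated analogue (\autoref{tflat2}).
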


Over the years \Cref{negBoch,posBoch} have been adapted to work with various additional structures on $ M $, in particular \emph{Riemannian foliations}. We will go through all of the necessary preliminaries about Riemannian foliations thoroughly in \Cref{Riem}. \textsc{Kamber} and \textsc{Tondeur} devised the following analogue of \autoref{negBoch} \cite[Theorem B]{KT2}:

\begin{theorem} \label{tnegBoch} Let $ M $ be a connected closed oriented manifold, endowed with a Riemannian foliation $ (\mc{F},g) $ and let $ \Ric^T $ denote the transverse Ricci curvature of $ (\mc{F},g) $. If $ \Ric^T $ is negative semi-definite everywhere, then every transverse Killing vector field is transverse parallel. If additionally $ \Ric^T $ is negative definite at one point, then no non-trivial transverse Killing vector fields exist.
\end{theorem}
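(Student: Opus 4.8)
The plan is to transversalize the proof of \Cref{negBoch} given in \cite[Chapter 7]{Pete}, replacing every Riemannian object by its counterpart on the normal bundle $Q$ and treating the integration step with particular care, since the mean curvature of the foliation will enter there. The three ingredients are: the transverse Killing equation, a pointwise transverse Weitzenböck identity, and a transverse divergence theorem from the preliminaries in \Cref{Riem}.

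First I would record the transverse Killing equation. A transverse Killing field $X$ is a foliate section of $Q$ whose transverse covariant derivative $\nabla^T X$, viewed as an endomorphism of $Q$, is skew-adjoint with respect to the transverse metric; equivalently the transverse Lie derivative of $g^T$ along $X$ vanishes. In particular the transverse divergence $\operatorname{div}^T X = \tr(\nabla^T X)$ is zero. Next I would establish the pointwise transverse Weitzenböck identity. Since a Riemannian foliation is locally modelled on a Riemannian submersion and $\Ric^T$, $\nabla^T$ are the transverse lifts of the Ricci curvature and Levi-Civita connection of the local quotient, the classical Bochner identity for Killing fields transfers verbatim to the transverse data, giving
\[
  \tfrac{1}{2}\,\Delta_B |X|^2 = |\nabla^T X|^2 - \Ric^T(X,X),
\]
where $|X|^2$ is a basic function and $\Delta_B$ is the basic Laplacian, so that every term is basic.

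The crux is the integration, and I expect this to be the main obstacle. On a closed manifold the naive transverse divergence theorem fails: for a foliate vector field $V$ one has $\int_M \operatorname{div}^T(V)\,dV = \int_M g^T(V,\kappa^\sharp)\,dV$, where $\kappa$ is the mean curvature form of the foliation, so integrating the identity above produces an uncontrolled correction $\tfrac{1}{2}\int_M \kappa^\sharp(|X|^2)\,dV$. This is exactly the point where the bundle-like structure and the mean-curvature analysis assembled in \Cref{Riem} must be used: one arranges a bundle-like metric for which $\kappa$ is basic, and then shows that the correction term drops out for a transverse Killing field — either directly, using $\operatorname{div}^T X = 0$ together with the skew-adjointness of $\nabla^T X$, or by reducing to a taut (minimal-leaf) representative where $\kappa = 0$. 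The outcome of this step is the integral identity
\[
  \int_M |\nabla^T X|^2 \, dV = \int_M \Ric^T(X,X)\, dV.
\]

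With this identity in hand the conclusion parallels the classical case exactly. If $\Ric^T$ is negative semi-definite everywhere, the left-hand side is non-negative while the right-hand side is non-positive, so both vanish: $\nabla^T X \equiv 0$, i.e.\ $X$ is transverse parallel, and $\Ric^T(X,X) \equiv 0$. If in addition $\Ric^T$ is negative definite at one point, then $X$ must vanish at that point, and being transverse parallel it vanishes identically. The essential difficulty is therefore not the curvature argument itself, which is formal, but controlling the mean-curvature correction in the transverse integration by parts.
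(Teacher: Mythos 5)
Your pointwise analysis is sound, but note first that the paper itself never proves this theorem: it is quoted from Kamber--Tondeur \cite{KT2}, and the only Bochner-type argument actually carried out in the paper is the proof of \autoref{Bochner}, which integrates the transverse Bochner identity via \autoref{div} and for exactly that reason must \emph{assume} that $(\mc{F},g)$ is harmonic and transversely oriented. The statement you are proving has no such hypotheses, and this is where your proposal has a genuine gap. As you correctly observe, integration produces the error term $\int_M df(\kappa^\sharp)\,\mu$ with $f=\frac12 g_T(X,X)$, but neither of your proposed remedies eliminates it. ``Reducing to a taut representative'' is impossible in general: tautness of a Riemannian foliation is a cohomological invariant (the Álvarez class in $H_B^1(\mc{F})$, represented by the basic component of $\kappa$), and non-taut Riemannian foliations on closed manifolds exist --- Carrière's Riemannian flows on hyperbolic torus bundles are the standard examples. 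The ``direct'' cancellation is unsubstantiated: $\Div_T X=0$ and skew-adjointness of $\nabla^T X$ are pointwise statements about $\nabla^T X$ and give no control over the pairing of the basic one-form $df$ with $\kappa$; making $\int_M df(\kappa^\sharp)\,\mu$ vanish for \emph{all} basic $f$ amounts to arranging $\kappa$ basic with $\delta_B\kappa=0$, which is a deep metric-normalization theorem (Domínguez; March--Min-Oo--Ruh), not anything available from \Cref{Riem}. (A cosmetic point: with the paper's convention $\Delta_B=-\Delta_T$ from \autoref{laps}, your Weitzenböck identity should read $\tfrac12\Delta_B g_T(X,X)=\Ric^T(X,X)-|\nabla^T X|^2$.)

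The missing idea --- and the structural reason the Killing case holds in this generality while the paper's \autoref{Bochner} needs harmonicity --- is that the negative case requires no integration at all: it follows from the strong maximum principle. The function $f=\frac12 g_T(X,X)$ is basic (\autoref{grad}), and the transverse Bochner identity gives $\Delta_T f=|\nabla^T X|^2-\Ric^T(X,X)\geq 0$ when $\Ric^T\leq 0$. By \autoref{curv}, near any point $f$ descends to a function $\bar f$ on a local quotient $N$ with $\Delta_N\bar f\geq 0$. At a point where $f$ attains its maximum (which exists since $M$ is compact), $\bar f$ has an interior maximum, so Hopf's strong maximum principle forces $\bar f$, hence $f$, to be constant near that point; the set $\{f=\max f\}$ is therefore open and closed, so $f$ is constant on the connected manifold $M$. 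Then $\Delta_T f\equiv 0$, and since both $|\nabla^T X|^2$ and $-\Ric^T(X,X)$ are non-negative, each vanishes identically, i.e.\ $X$ is transverse parallel and $\Ric^T(X,X)\equiv 0$. If moreover $\Ric^T$ is negative definite at some point $p$, then $X_p=0$, and constancy of $|X|$ (\autoref{const}) gives $X\equiv 0$. This route uses neither tautness nor transverse orientability nor orientability of $M$, matching the stated hypotheses exactly; your integration-based approach, even if repaired, would only recover the theorem for taut foliations.
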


In fact, \autoref{tnegBoch} is even a generalization of \autoref{negBoch}, where the latter corresponds to the special case that $ \mc{F} $ is the trivial foliation of $ M $ by singletons. In the direction of \autoref{van}, there is the following vanishing theorem for basic cohomology, which was discovered independently by \textsc{Min-Oo}, \textsc{Ruh} and \textsc{Tondeur} \cite[Theorem 8.16]{Tond} as well as \textsc{Hebda} \cite[Theorem 1]{Hebd}:

\begin{theorem} \label{bvan}
If $ (M,\mc{F},g) $ are as in \autoref{tnegBoch} and $ \Ric^T $ is positive definite everywhere, then the first basic Betti number $ b_1(\mc{F}) = 0 $.
\end{theorem}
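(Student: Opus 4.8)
The plan is to run the transverse analogue of the classical argument behind \Cref{van}: identify the first basic cohomology with a space of basic-harmonic one-forms, and then use a transverse Weitzenböck formula together with the positivity of $\Ric^T$ to force every such form to vanish.

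First I would invoke the basic Hodge theory for Riemannian foliations on closed manifolds (El Kacimi-Alaoui, Tondeur): with respect to the basic Laplacian $\Delta_B = d_B\delta_B + \delta_B d_B$ acting on the basic forms $\Omega^\bullet_B(\mc{F})$, one has an isomorphism $H^1(\mc{F}) \cong \ker\big(\Delta_B|_{\Omega^1_B}\big) =: \mc{H}^1_B$. Hence it suffices to prove $\mc{H}^1_B = 0$, i.e.~that every basic-harmonic one-form vanishes.

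The heart of the argument is a transverse Weitzenböck (Bochner) formula on basic one-forms,
\[
\Delta_B\,\omega \;=\; \nabla^*\nabla\,\omega \;+\; \Ric^T(\omega^\sharp)^\flat,
\]
where $\nabla$ is the transverse Levi-Civita connection on the normal bundle $Q = TM/T\mc{F}$ and $\nabla^*\nabla$ the associated transverse rough Laplacian. I would derive this exactly as in the Riemannian case, commuting $d_B$ and $\delta_B$ past $\nabla$ in a transversally adapted coframe and collecting the transverse curvature terms, the only ones surviving on one-forms being $\Ric^T$. Pairing with a basic-harmonic $\omega$ and integrating over $M$ against the Riemannian volume then gives
\[
0 \;=\; \int_M \langle \Delta_B\omega,\omega\rangle \;=\; \int_M |\nabla\omega|^2 \;+\; \int_M \Ric^T(\omega^\sharp,\omega^\sharp),
\]
using $\langle \Delta_B\omega,\omega\rangle_{L^2} = \|d_B\omega\|^2 + \|\delta_B\omega\|^2 = 0$ on the left. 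Both terms on the right are non-negative; since $M$ is closed and $\Ric^T$ is positive definite everywhere, compactness yields a uniform $c>0$ with $\Ric^T(\omega^\sharp,\omega^\sharp) \ge c\,|\omega^\sharp|^2$, whence $\int_M \Ric^T(\omega^\sharp,\omega^\sharp) \ge c\,\|\omega\|^2$. The integral being zero forces $\omega = 0$, so $\mc{H}^1_B = 0$ and therefore $b_1(\mc{F}) = 0$.

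I expect the main obstacle to be the mean curvature of the foliation. A Riemannian foliation need not be taut, and then both the codifferential $\delta_B$ entering the basic Hodge theory and the transverse divergence theorem used in the integration by parts acquire correction terms built from the (basic part of the) mean curvature form $\kappa$. The Weitzenböck identity and the passage from $\int\langle\nabla^*\nabla\omega,\omega\rangle$ to $\int|\nabla\omega|^2$ displayed above are literally correct only in the taut case $\kappa = 0$. To make the argument rigorous I would first choose, by Dom\'inguez's theorem, a bundle-like metric for which $\kappa$ is basic, carry the $\kappa$-terms through all of the above, and check that they cancel in the final integrated identity; equivalently, one reduces to a minimal (taut) model in which they vanish outright. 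Controlling these mean-curvature contributions---rather than the curvature estimate itself, which is immediate from positive definiteness---is the genuinely delicate point.
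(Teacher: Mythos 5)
The paper itself gives no proof of \autoref{bvan}: it is quoted from the literature (Min-Oo--Ruh--Tondeur \cite{Tond}, Hebda \cite{Hebd}). The closest in-paper counterpart is the corollary following \autoref{Bochner}, which is exactly your argument in dualized form --- the vector field $X=\omega^\sharp$, the function $f=\tfrac12 g_T(X,X)$ and \autoref{harmeq}, instead of a Weitzenböck formula on one-forms --- but it is proved there under the additional standing hypotheses that $\mc{F}$ is transversely oriented and \emph{harmonic}. In the taut case your sketch is correct and essentially coincides with what the paper does.

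The genuine gap is your final paragraph, and it sits exactly where you suspect: \autoref{bvan} inherits from \autoref{tnegBoch} no tautness hypothesis, and neither of your two proposed repairs works as stated. (a) With Domínguez's metric ($\kappa=\kappa_B$ basic) the correction terms do \emph{not} cancel. Basic-harmonicity of $\omega_X$ then means $\nabla^T X$ is $g_T$-symmetric and $\Div_T X = \kappa(X)$ (the $\kappa$-corrected version of \autoref{codif}), the divergence theorem becomes $\int_M \Div_T Y \cdot \mu = \int_M g(\kappa^\sharp,Y)\cdot\mu$, and carrying both corrections through \autoref{harmeq} (whose third term is now the nonzero $X(\Div_T X)$) and integrating leaves
\[
0 = \int_M \Big( |\nabla^T X|^2 + \Ric^T(X,X) + (\nabla^T_X\kappa)(X) \Big)\cdot\mu \, , \qquad
\int_M (\nabla^T_X\kappa)(X)\cdot\mu = -\langle d_B f,\kappa\rangle_B = -\langle f,\delta_B\kappa\rangle_B \, ,
\]
and this last term has no sign for a general Domínguez metric. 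What rescues the argument is a \emph{sharper} normalization: one can further rescale the leafwise metric so that $\kappa$ is basic \emph{and basic-harmonic} (Álvarez López, March--Min-Oo--Ruh, Domínguez combined), whence $\delta_B\kappa=0$ and the error term dies; with that input, plus a basic Hodge theorem valid without tautness (El Kacimi-Alaoui--Hector, Park--Richardson, rather than the taut \autoref{Hodge}), your proof closes up. (b) Your claimed equivalent, ``reduce to a minimal (taut) model,'' is not available: tautness is a nontrivial obstruction, measured by the Álvarez class $[\kappa_B]\in H_B^1(\mc{F})$, and Carrière's Riemannian flows on hyperbolic torus bundles are not taut for any bundle-like metric. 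Under $\Ric^T>0$ the foliation \emph{is} taut, but the only route to that is $[\kappa_B]\in H_B^1(\mc{F})=0$, i.e.\ the very theorem being proved, so assuming tautness is circular. This is precisely why the non-taut case is the hard content of the statement; Hebda's proof \cite{Hebd} of the general case proceeds by quite different (focal point) arguments rather than a Bochner identity.
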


The main goal of this article is to close the remaining gap by developing a generalization of \autoref{posBoch}:

\begin{theorem} \label{introBoch}
Let $ M $ be a connected closed oriented manifold, endowed with a transversely oriented harmonic Riemannian foliation $ (\mc{F}, g) $. If $ \Ric^T $ is positive semi-definite everywhere, then every basic harmonic vector field is transverse parallel. If additionally $ \Ric^T $ is positive definite at one point, then no non-trivial basic harmonic vector fields exist.
\end{theorem}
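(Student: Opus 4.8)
The plan is to transplant the classical argument behind \autoref{posBoch} --- the Bochner--Weitzenböck formula followed by integration by parts --- into the transverse geometry of $(\mc F,g)$, working with the transverse Levi--Civita connection $\nabla^T$ on the normal bundle and using the harmonicity of the foliation to control the mean curvature terms that have no classical counterpart. First I would reduce everything to a single basic one-form. Let $X$ be a basic harmonic vector field and set $\omega = X^\flat \in \Omega^1_B(\mc F)$, so that by hypothesis $\Delta_B\omega = 0$, where $\Delta_B = d_B\delta_B + \delta_B d_B$ and $\delta_B$ is the formal $L^2$-adjoint of $d_B$ on basic forms. Since transverse parallelism of $X$ is equivalent to $\nabla^T\omega = 0$, it suffices to prove both assertions for $\omega$.

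The analytic heart is a transverse Weitzenböck identity
\[
  \Delta_B\,\omega \;=\; \nabla^{T*}\nabla^T\omega \;+\; \Ric^T(\omega),
\]
in which $\nabla^{T*}\nabla^T$ is the transverse rough Laplacian and $\Ric^T(\omega)$ is the Weitzenböck curvature term, which for one-forms is exactly the transverse Ricci contraction. I would derive this by repeating the standard local computation with $\nabla$, $R$ and $\Ric$ replaced by their transverse analogues $\nabla^T$, $R^T$, $\Ric^T$, working in a local orthonormal frame of the normal bundle adapted to $\mc F$. The step that genuinely uses the hypotheses is the identification of $\delta_B$ with the naive transverse codifferential $-\sum_i \iota_{E_i}\nabla^T_{E_i}$: in general $\delta_B$ differs from this by an interior product with the mean curvature form, and it is precisely the assumption that $\mc F$ is harmonic (minimal leaves, vanishing mean curvature) that makes this correction disappear, so that $\Delta_B$ is genuinely the transverse Hodge--de Rham Laplacian.

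With the Weitzenböck formula in hand, I would pair it with $\omega$ and integrate over the closed manifold $M$ against the Riemannian volume. Because $\mc F$ is harmonic, the transverse divergence theorem $\int_M \Div^T(\,\cdot\,)\,\dd V = 0$ holds with no mean-curvature defect, so that $\nabla^{T*}$ is the true $L^2$-adjoint of $\nabla^T$ on basic sections and
\[
  0 = \int_M \langle \Delta_B\omega,\omega\rangle\,\dd V = \int_M \bigl|\nabla^T\omega\bigr|^2\,\dd V + \int_M \Ric^T(\omega,\omega)\,\dd V.
\]
Since $\Ric^T$ is positive semi-definite everywhere, both integrands are non-negative, hence both vanish identically; in particular $\nabla^T\omega = 0$, i.e.\ $X$ is transverse parallel, which proves the first claim. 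For the second, if $\Ric^T$ is positive definite at some point $p$ then $\Ric^T(\omega,\omega)(p)=0$ forces $\omega_p = 0$; as $\omega$ is basic, $|\omega|^2$ is constant along the leaves, while $\nabla^T\omega = 0$ makes it constant in the transverse directions, so $|\omega|^2$ is globally constant on the connected manifold $M$ and therefore vanishes identically, giving $X \equiv 0$.

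I expect the main obstacle to be the Weitzenböck identity together with the justification of the integration by parts, taken as a unit: the entire argument hinges on the mean curvature form dropping out, so the delicate point is to verify that the harmonic (and transversely oriented) hypotheses are exactly strong enough to make $\delta_B$ the transverse codifferential and the transverse divergence theorem hold without correction terms. This is the single place where the foliated setting departs essentially from the classical Bochner technique, and everything downstream --- the sign analysis and the rigidity conclusion --- is then a routine transcription of the manifold case.
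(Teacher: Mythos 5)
Your argument is correct, but it takes a genuinely different route from the paper's. The paper runs the Bochner technique at the level of \emph{functions}, following Petersen's treatment of the classical case: for a basic harmonic field $X$ it sets $f := \tfrac{1}{2}\, g_T(X,X)$ and establishes the pointwise identity $\Delta_T f = |\nabla^T X|^2 + \Ric^T(X,X)$ (\autoref{harmeq}), using only the first-order characterization of basic harmonicity --- $\nabla^T X$ is $g_T$-symmetric and $\Div_T X = 0$ (\autoref{symm}, \autoref{codif}, \autoref{cordef}) --- together with adapted orthonormal frames; integrating via \autoref{dlem} and the transverse divergence theorem (\autoref{div}) then yields both conclusions, with \autoref{const} supplying the final rigidity step. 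You instead work at the level of basic \emph{one-forms} and make the transverse Weitzenböck identity $\Delta_B\omega = \nabla^{T*}\nabla^T\omega + \Ric^T(\omega)$ the analytic heart, followed by $L^2$-pairing. Both routes use tautness/harmonicity in exactly the same place --- the basic mean curvature corrections to $\delta_B$ and to the divergence theorem must vanish --- and you identify this correctly as the crux. Your route is essentially the one by which \autoref{bvan} was originally proved by Min-Oo, Ruh and Tondeur, so it connects directly to the existing literature and would generalize to basic forms of arbitrary degree (with the full Weitzenböck curvature operator replacing $\Ric^T$); its cost is that the Weitzenböck identity is a heavier piece of machinery than anything the paper needs, since the paper never has to know that $\Delta_B$ has Weitzenböck form, only that $\Delta_B\omega_X = 0$ encodes the two first-order conditions on $X$. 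If you flesh out the sketch, the points needing care are: (i) all frame sums in $\nabla^{T*}\nabla^T$ and in the curvature term must run only over a frame of $E^\perp$, since $\nabla^T$-derivatives of basic sections along $E$ vanish; and (ii) the integration by parts $\int_M g(\nabla^{T*}\nabla^T\omega,\omega)\,\mu = \int_M |\nabla^T\omega|^2\,\mu$ must itself be justified by applying \autoref{div} to a suitable foliate vector field, exactly as the paper does for $\nabla f$.
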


By combining \Cref{tnegBoch,introBoch} with a basic Hodge theorem, we obtain the following variations of \autoref{flat} and \autoref{bvan}:

\begin{corollary}
If $(M, \mc{F},g) $ are as in \autoref{introBoch} and $ \Ric^T $ is positive semi-definite everywhere, then $ b_1(\mc{F}) \leq \codim \mc{F} $. If additionally $ \Ric^T $ is positive definite at one point, then $ b_1(\mc{F}) = 0 $.
\end{corollary}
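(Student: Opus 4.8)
The plan is to deduce this corollary from \autoref{introBoch} in precisely the manner that \autoref{van} and \autoref{flat} follow from \autoref{posBoch} in the classical setting. Recall that by definition $b_1(\mc{F}) = \dim H^1_B(\mc{F})$ is the dimension of the first basic cohomology group. For a transversely oriented harmonic Riemannian foliation on a connected closed oriented manifold, a basic Hodge theorem supplies an isomorphism $H^1_B(\mc{F}) \cong \mc{H}^1_B(\mc{F})$, where $\mc{H}^1_B(\mc{F})$ denotes the space of basic harmonic one-forms. Using the metric $g$ to pass to $g$-dual vector fields then identifies $\mc{H}^1_B(\mc{F})$ with the space of basic harmonic vector fields, so it suffices to bound the dimension of the latter.

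First I would settle the second assertion, which is immediate: if $\Ric^T$ is positive semi-definite everywhere and positive definite at one point, then \autoref{introBoch} guarantees that no non-trivial basic harmonic vector field exists, so $\mc{H}^1_B(\mc{F}) = 0$ and hence $b_1(\mc{F}) = 0$.

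For the first assertion, \autoref{introBoch} tells us under the weaker hypothesis of positive semi-definiteness that every basic harmonic vector field is transverse parallel, i.e.\ parallel with respect to the transverse Levi-Civita connection $\nabla^T$ on the normal bundle $Q$ of rank $\rk Q = \codim \mc{F}$. I would then bound the dimension of the space of such parallel sections by evaluating at a fixed point $p \in M$: the linear map $X \mapsto X_p \in Q_p$ is injective, since any $\nabla^T$-parallel section is recovered from its value at $p$ by $\nabla^T$-parallel transport along paths, using that $M$ is connected. Consequently the space of transverse parallel vector fields has dimension at most $\dim Q_p = \codim \mc{F}$, and therefore $b_1(\mc{F}) = \dim \mc{H}^1_B(\mc{F}) \leq \codim \mc{F}$.

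The dimension count is routine once \autoref{introBoch} is in hand, and the standard properties of $\nabla^T$-parallel transport require no further comment. The one genuine point deserving care, and the main obstacle, is the invocation of the basic Hodge theorem: one must ensure that the transverse Laplacian acting on basic forms admits a Hodge decomposition with harmonic representatives in each basic cohomology class. This is exactly where the harmonicity (tautness) of $\mc{F}$ enters, guaranteeing that the basic codifferential is the formal adjoint of the basic differential so that the theory goes through. Granting this, as is standard for harmonic Riemannian foliations on closed manifolds, the argument above is complete.
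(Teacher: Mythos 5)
Your proposal is correct and follows essentially the same route as the paper: both invoke the basic Hodge theorem to identify $ b_1(\mc{F}) $ with $ \dim \mc{H}_B^1(\mc{F}) $, apply the main Bochner theorem, and conclude via injectivity of the evaluation map $ \omega_X \mapsto X_x $ at a fixed point. The only cosmetic difference is that you justify this injectivity by $ \nabla^T $-parallel transport along paths, whereas the paper cites its lemma that transverse parallel fields have constant length; both arguments are valid.
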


\begin{corollary} \label{tflat}
If $(M, \mc{F},g) $ are as in \autoref{introBoch} and $ \Ric^T $ vanishes everywhere, then the dimension of the vector space $ \mf{iso}(\mc{F}) $ of transverse Killing vector fields of $ (\mc{F},g) $ is equal to $ b_1(\mc{F}) $.
\end{corollary}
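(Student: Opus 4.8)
The plan is to mirror the classical proof of \autoref{flat}, using that the zero tensor is at the same time negative and positive semi-definite, so that both transverse Bochner theorems apply simultaneously. Write $\mc{P}$ for the space of transverse parallel vector fields and $\mc{H}$ for the space of basic harmonic vector fields, and recall that $\mf{iso}(\mc{F})$ is by definition the space of transverse Killing fields. I would deduce the corollary from the chain of identities $\mf{iso}(\mc{F}) = \mc{P} = \mc{H}$, combined with a basic Hodge theorem: the latter identifies the space of basic harmonic one-forms with the first basic cohomology, and transporting through the transverse metric gives $\dim \mc{H} = b_1(\mc{F})$.

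The two a priori non-trivial inclusions come for free from the hypotheses. As $\Ric^T \equiv 0$ is negative semi-definite, \autoref{tnegBoch} gives $\mf{iso}(\mc{F}) \subseteq \mc{P}$; as it is also positive semi-definite, \autoref{introBoch} gives $\mc{H} \subseteq \mc{P}$. The reverse inclusion $\mc{P} \subseteq \mf{iso}(\mc{F})$ is elementary: the transverse Killing condition requires the symmetric part of $\nabla^T \bar{X} \in \End(Q)$ to vanish, which is automatic once $\nabla^T \bar{X} = 0$. One must still check that such an $\bar{X}$ is foliate, but this is immediate — the vanishing of $\nabla^T_V \bar{X}$ along directions $V$ tangent to the leaves is precisely the holonomy-invariance of $\bar{X}$ — so that $\bar{X}$ is a genuine element of $\mf{iso}(\mc{F})$.

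It remains to show $\mc{P} \subseteq \mc{H}$, and this is the step I expect to be the main obstacle. The $g_Q$-dual one-form $\omega = \bar{X}^\flat$ of a transverse parallel field satisfies $\nabla^T \omega = 0$; its basic exterior derivative, being the antisymmetrization of $\nabla^T \omega$, therefore vanishes, so $\omega$ is closed. The delicate point is coclosedness: for a general Riemannian foliation the basic codifferential $\delta_B$ differs from the naive transverse divergence $-\tr_Q \nabla^T$ by a mean curvature term, and so a transverse parallel form need not be coclosed. This is exactly where the standing assumption that $(\mc{F},g)$ is a harmonic, transversely oriented Riemannian foliation is used: minimality of the leaves makes the mean curvature contribution drop out, leaving $\delta_B \omega = -\tr_Q \nabla^T \omega = 0$. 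Once the three spaces are identified, finite-dimensionality is automatic, since a parallel section of the normal bundle $Q$ over the connected manifold $M$ is determined by its value at one point (whence $\dim \mc{P} \le \codim \mc{F}$), and the basic Hodge theorem yields the desired equality $\dim \mf{iso}(\mc{F}) = \dim \mc{H} = b_1(\mc{F})$.
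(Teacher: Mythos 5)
Your proposal is correct and follows essentially the same route as the paper: the paper also deduces the result by combining both transverse Bochner theorems (the zero tensor being simultaneously negative and positive semi-definite) with the chain $\mf{iso}(\mc{F}) = \mc{P} = \mc{H}$ and the basic Hodge theorem. The step you flagged as delicate, namely $\mc{P} \subseteq \mc{H}$ via $\delta_B \omega_X = -\Div_T X$ under the harmonicity assumption, is exactly how the paper handles it (its \autoref{dlem}, \autoref{codif} and \autoref{cordef}).
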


Finally, we apply \autoref{tflat} to two classes of spaces which naturally naturally satisfy all of the required conditions, namely \emph{degenerate 3-$(\alpha,\delta) $-Sasaki} and certain \emph{Sasaki-$ \eta $-Einstein manifolds}:

\begin{theorem} \label{3adS}
\vspace{-.2cm} Let $ M $ be a connected closed degenerate 3-$(\alpha, \delta) $-Sasaki manifold with characteristic foliation $ \mc{F} $. Then the dimension of the automorphism group $ \Aut(M) $ is at most $ b_1(\mc{F}) + 3 $. \\
In particular, if $ M $ arises via \cite[Theorem 3.1]{GRS} as a $ T^3 $-bundle over a compact hyperkähler manifold $ N $ with integral Kähler classes, then $ \dim \Aut(M) \leq b_1(N) + 3 $.
\end{theorem}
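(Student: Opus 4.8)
The plan is to realize the Lie algebra of $\Aut(M)$ as an extension of a subalgebra of $\mf{iso}(\mc{F})$ by the algebra of \emph{vertical} infinitesimal automorphisms, whose dimension I will bound by three, and then to feed this into \Cref{tflat}. First I would verify that the characteristic foliation $\mc{F}$ meets all the hypotheses of \Cref{tflat}. Since $M$ carries an almost $3$-contact metric structure it is orientable, and the transverse bundle $Q = TM/T\mc{F}$ inherits a hyperkähler structure; in particular this yields a transverse volume form, so $(\mc{F},g)$ is transversely oriented. The characteristic foliation of a degenerate $3$-$(\alpha,\delta)$-Sasaki manifold has minimal (indeed totally geodesic) leaves and is bundle-like, hence it is a harmonic Riemannian foliation. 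Finally, because the transverse geometry is hyperkähler it is transverse Ricci-flat, i.e.\ $\Ric^T \equiv 0$. Thus \Cref{tflat} applies and gives $\dim \mf{iso}(\mc{F}) = b_1(\mc{F})$.

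Next I would build a Lie algebra homomorphism $\pi \colon \Lie(\Aut(M)) \to \mf{iso}(\mc{F})$. As every automorphism preserves the metric $g$, the group $\Aut(M)$ is a closed subgroup of the compact isometry group of $M$, hence a compact Lie group, and each $X \in \Lie(\Aut(M))$ is a Killing field. Such an $X$ preserves the entire $3$-$(\alpha,\delta)$-Sasaki structure, in particular the characteristic distribution $V = \langle \xi_1,\xi_2,\xi_3\rangle$ and the transverse metric; consequently $X$ is foliate and its transverse part $\bar X$ is a transverse Killing field. Setting $\pi(X) = \bar X$ then defines the desired homomorphism, whose image lies inside $\mf{iso}(\mc{F})$, so that $\dim \im \pi \leq b_1(\mc{F})$.

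The main obstacle is controlling the kernel of $\pi$, which consists precisely of those infinitesimal automorphisms tangent to the leaves of $\mc{F}$. I expect to show that any such vertical field is a constant-coefficient combination $\sum_i c_i \xi_i$ of the Reeb fields: writing it as $\sum_i f_i \xi_i$ and imposing that it be Killing while preserving each $\eta_j$ and $\phi_j$, the degenerate structure equations -- in which the commuting $\xi_i$ generate an isometric $T^3$-action with pairwise orthogonal, constant-length orbits -- should force all the coefficient functions $f_i$ to be constant. This is where the hypothesis $\delta = 0$ is genuinely used, and it is the technical heart of the argument. It follows that $\dim \ker \pi \leq 3$, whence the rank--nullity theorem yields $\dim \Aut(M) = \dim \ker \pi + \dim \im \pi \leq 3 + b_1(\mc{F})$.

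For the concluding assertion I would exploit the explicit geometry of the construction in \cite[Theorem 3.1]{GRS}: when $M$ arises as a principal $T^3$-bundle over the compact hyperkähler manifold $N$, the leaves of $\mc{F}$ are exactly the torus fibres, and the basic differential forms are precisely the pullbacks of forms on $N$. Hence the basic cohomology of $\mc{F}$ is isomorphic to the de Rham cohomology of $N$, giving $b_1(\mc{F}) = b_1(N)$. Substituting this into the inequality above produces the stated bound $\dim \Aut(M) \leq b_1(N) + 3$.
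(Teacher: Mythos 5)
Your proposal follows essentially the same route as the paper: verify that the characteristic foliation is transversely oriented, harmonic and transverse Ricci-flat so that \autoref{tflat2} gives $\dim \mf{iso}(\mc{F}) = b_1(\mc{F})$, project infinitesimal automorphisms orthogonally to transverse Killing fields, identify the kernel of this projection with the constant-coefficient span of the Reeb fields, and conclude by rank-nullity (plus $b_1(\mc{F}) = b_1(N)$ in the bundle case). The kernel step you flag as the technical heart is in fact a two-line computation in the paper: for $X = \sum_i f_i \xi_i$ one has $0 = (\mc{L}_X \eta_j)(Y) = \sum_i \big( f_i (\mc{L}_{\xi_i}\eta_j)(Y) + Y(f_i)\,\eta_j(\xi_i) \big) = Y(f_j)$, using $\mc{L}_{\xi_i}\eta_j = 0$ (which, as you correctly suspect, is exactly where $\delta = 0$ enters) and $\eta_j(\xi_i) = \delta_{ij}$, so connectedness of $M$ forces each $f_j$ to be constant.
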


\begin{theorem} \label{SeE}
Let $ M $ be a connected closed Sasaki-$ \eta $-Einstein manifold with transverse Calabi-Yau structure and characteristic foliation $ \mc{F} $. Then the dimension of the automorphism group $ \Aut(M) $ is at most $ b_1(\mc{F}) + 1 $. \\
In particular, if $ M $ arises as the Boothby-Wang bundle over a compact Calabi-Yau mani- fold $ N $ with integral Kähler class, then $ \dim \Aut(M) \leq b_1(N) + 1 $.
\end{theorem}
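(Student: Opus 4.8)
The plan is to deduce both statements from \Cref{tflat}, applied to the characteristic foliation $\mc{F}$. First I would verify that the hypotheses of \Cref{introBoch} are met. On a Sasaki manifold the Reeb field $\xi$ is a unit Killing field whose orbits are geodesics, so $\mc{F}$ is a harmonic (in fact totally geodesic) transversely oriented Riemannian foliation, the normal bundle inheriting an orientation from the transverse Kähler structure. The transverse Calabi-Yau hypothesis is precisely the statement that this transverse Kähler metric is Ricci-flat, i.e.\ $\Ric^T \equiv 0$. Hence $(M,\mc{F},g)$ satisfies the assumptions of \Cref{introBoch} with vanishing transverse Ricci curvature, and \Cref{tflat} yields $\dim \mf{iso}(\mc{F}) = b_1(\mc{F})$.

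The heart of the argument is then a dimension count for $\Aut(M)$. Since $M$ is closed, $\Aut(M)$ is a closed subgroup of the (compact) isometry group, hence a compact Lie group with $\dim \Aut(M) = \dim \mf{aut}(M)$, where $\mf{aut}(M)$ is the Lie algebra of infinitesimal automorphisms of the Sasaki structure. Every $X \in \mf{aut}(M)$ preserves $\xi$ and $g$, so it is a foliate Killing field and projects to a transverse Killing field $\bar X \in \mf{iso}(\mc{F})$. This defines a linear map $\mf{aut}(M) \to \mf{iso}(\mc{F})$, $X \mapsto \bar X$, whose kernel consists of those automorphisms tangent to $\mc{F}$, i.e.\ the fields $f\xi$. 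Using $\iota_\xi \dd\eta = 0$ and $\eta(\xi) = 1$ I would compute $\mc{L}_{f\xi}\eta = \dd f$, so that preserving $\eta$ forces $f$ constant; thus the kernel is exactly $\R\xi$, one-dimensional. Combining, $\dim \Aut(M) = \dim \mf{aut}(M) \leq 1 + \dim \mf{iso}(\mc{F}) = b_1(\mc{F}) + 1$.

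For the Boothby-Wang case, $M \to N$ is a principal $S^1$-bundle whose fibers are the leaves of $\mc{F}$, and the transverse geometry is that of the Calabi-Yau base $N$; in particular the transverse Calabi-Yau hypothesis holds, so the preceding paragraph applies verbatim and gives $\dim \Aut(M) \leq b_1(\mc{F}) + 1$. It then remains to identify $b_1(\mc{F}) = b_1(N)$: the basic forms for the foliation by fibers are exactly the $S^1$-invariant horizontal forms, which are the pullbacks along $\pi$ of forms on $N$, so the basic de Rham complex is isomorphic to that of $N$ and $H^1(\mc{F}) \cong H^1(N)$. The bound $\dim \Aut(M) \leq b_1(N) + 1$ follows.

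I expect the main obstacle to be the careful bookkeeping around the projection $\mf{aut}(M) \to \mf{iso}(\mc{F})$: one must confirm that Sasaki automorphism fields are genuinely foliate with transverse-Killing projection, and pin down the kernel exactly via the computation $\mc{L}_{f\xi}\eta = \dd f$, which is where the extra dimension coming from $\xi$ is accounted for. Alongside this, verifying that $\mc{F}$ is harmonic and transversely oriented so that \Cref{tflat} genuinely applies, and (in the regular case) the identification of basic cohomology with $H^\bullet(N)$, are the technical points I would state with the most care.
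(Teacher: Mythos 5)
Your proposal is correct and follows essentially the same route as the paper: apply the Ricci-flat Bochner corollary to the harmonic, transversely oriented characteristic foliation to get $\dim\mf{iso}(\mc{F}) = b_1(\mc{F})$, then bound $\dim\mf{aut}(M)$ via the projection $\mf{aut}(M)\to\mf{iso}(\mc{F})$ whose kernel is exactly $\R\xi$ (your Cartan-formula computation $\mc{L}_{f\xi}\eta = \dd f$ is the same calculation the paper performs by expanding the Lie derivative directly), finishing with rank--nullity and the identification $b_1(\mc{F}) = b_1(N)$ in the Boothby--Wang case. The technical points you flag — foliateness of automorphism fields and transverse-Killing property of their normal projections — are precisely the two lemmas the paper proves to justify that $\pi$ is well defined.
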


About the structure of this article: We start with a self-contained explanation of the required fundamentals about Riemannian foliations (\Cref{Riem}) and basic Hodge theory (\Cref{sHodge}). In \Cref{sBochner} we complete the proof of the main \autoref{introBoch} as well as its consequences and in \Cref{Sas} we provide the promised applications. \\

\textbf{Acknowledgements:} The author was partially supported by the German Academic Scholarship Foundation. The author thanks Oliver Goertsches for various fruitful discussion about the subject and Leander Stecker for the suggestion to apply the technique to Sasaki-$ \eta $-Einstein manifolds.

\section{Riemannian Foliations} \label{Riem}

Let $ M $ be a smooth manifold and $ (\mc{F},g_T) $ a \emph{Riemannian foliation} on $ M $. This means that $ \mc{F} $ is a foliation on $ M $ defined by an integrable subbundle $ E \subset TM $ and $ g_T $ is a \emph{transverse metric}, i.e.~a symmetric positive semi-definite $ (0,2) $-tensor field such that $ \ker g_T = E $ and $ \mc{L}_X g_T = 0 $ for all $ X \in \Gamma_\ell(E) $, where $ \Gamma_\ell $ denotes the set of all local sections of a fiber bundle. In order to avoid having to deal with quotient bundles, we shall choose and fix a so-called \emph{bundle-like metric} $ g $ on $ M $, i.e.~a Riemannian metric such that $ g(X^\perp, Y^\perp) = g_T(X,Y) $ for all $ X,Y \in TM $, where $ Z^\perp $ denotes the $ g $-orthogonal projection of $ Z \in TM $ to $ E^\perp $.

\begin{definition} \label{vfs}
The Lie algebra of \emph{foliated vector fields} and the vector space of \emph{transverse vector fields} are given by $ \mf{fol}(\mc{F}) := N_{\Gamma(TM)}\big(\Gamma(E)\big) $, the normalizer of $ \Gamma(E) $ inside $ \Gamma(TM) $, as well as $ \mf{trans}(\mc{F}) := \mf{fol}(\mc{F}) \cap \Gamma(E^\perp) $, respectively. We call a function $ f:M \to \R $ \emph{basic} if $ X(f) = 0 $ for all $ X \in \Gamma_\ell(E) $.
\end{definition}

\begin{lemma} \label{grad}
\begin{enumerate}
\item[a)] If $ X \in \mf{fol}(\mc{F}) $, then $ f := \frac{1}{2} \, g_T(X,X) $ is basic.
\item[b)] If $ f $ is basic, then its gradient (with respect to $ g $) satisfies $ \nabla f \in \mf{trans}(\mc{F}) $.
\end{enumerate}
\end{lemma}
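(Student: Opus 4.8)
The plan is to extract both statements from the two defining features of the transverse metric---its kernel is $E$ and it is invariant under leafwise flows, $\mc{L}_Y g_T = 0$ for $Y \in \Gamma_\ell(E)$---together with the bundle-like identity $g_T(A,B) = g(A^\perp, B^\perp)$. For part a) I would fix $Y \in \Gamma_\ell(E)$ and expand the Lie derivative of the $(0,2)$-tensor $g_T$ as $(\mc{L}_Y g_T)(X,X) = Y\big(g_T(X,X)\big) - 2\,g_T([Y,X],X)$. The left-hand side vanishes by invariance, and since $X$ is foliated the bracket $[Y,X]$ lies in $E = \ker g_T$, so the remaining term vanishes too. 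Hence $Y(f) = \tfrac12 Y\big(g_T(X,X)\big) = 0$ for every $Y \in \Gamma_\ell(E)$, which is exactly the statement that $f$ is basic.

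For part b) I would first dispatch the easy half: for $Y \in \Gamma_\ell(E)$ one has $g(\nabla f, Y) = Y(f) = 0$ by basicness, so $\nabla f \in \Gamma(E^\perp)$. The real content is showing that $\nabla f$ is foliated. Writing $V := \nabla f$, fixing $Y \in \Gamma_\ell(E)$ and pairing against an arbitrary foliated vector field $W$, I would again expand $0 = (\mc{L}_Y g_T)(V,W)$. Two simplifications appear: $[Y,W] \in E = \ker g_T$ because $W$ is foliated, and the bundle-like identity combined with $V \in E^\perp$ gives $g_T(V,W) = g(V,W) = W(f)$. The identity then collapses to $g_T([V,Y],W) = -Y\big(W(f)\big)$. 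A one-line auxiliary computation, $Y\big(W(f)\big) = W\big(Y(f)\big) + [Y,W](f)$ with $Y(f) = 0$ and $[Y,W] \in E$ annihilating the basic function $f$, shows the right-hand side is zero. Thus $g_T([V,Y],W) = 0$ for every foliated $W$.

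To conclude I would use that foliated vector fields span $TM$ pointwise---in a foliated chart the full coordinate frame is foliated---so that their $E^\perp$-projections span $E^\perp$. Since $g_T([V,Y],W) = g\big([V,Y]^\perp, W^\perp\big)$, its vanishing for all foliated $W$ forces $[V,Y]^\perp = 0$, i.e.~$[V,Y] \in \Gamma(E)$; together with $\nabla f \in \Gamma(E^\perp)$ this yields $\nabla f \in \mf{trans}(\mc{F})$. I expect the foliated-ness step of b) to be the main obstacle: one has to invoke the bundle-like compatibility at just the right moment to convert $g_T$-expressions into derivatives of $f$, and then handle the spanning argument carefully in order to pass from ``pairs to zero against every foliated field'' to ``lies in $E$''. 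The remainder is bookkeeping with the Lie-derivative formula.
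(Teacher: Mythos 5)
Your proof is correct and follows essentially the same route as the paper: both parts are extracted from the invariance $\mc{L}_Y g_T = 0$ for $Y \in \Gamma_\ell(E)$, the kernel property $\ker g_T = E$, and the bundle-like identity converting $g_T(\nabla f,\cdot)$ into derivatives of $f$. The only deviation is in part b), where the paper tests $[\nabla f, Y]$ against arbitrary sections of $E^\perp$ (handling the extra term via $g_T(\nabla f,[Y,W]) = [Y,W](f)$), so that nondegeneracy of $g_T$ on $E^\perp$ finishes immediately and your closing spanning argument for foliated fields, while valid, is unnecessary.
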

\begin{proof}
\begin{enumerate}
\item[a)] For all $ Y \in \Gamma_\ell(E) $: $ Y(f) = g_T([Y,X],X) = 0 $.
\item[b)] First, $ 0 = X(f) = g(\nabla f, X) $ for all $ X \in \Gamma_\ell(E) $, so $ \nabla f \in \Gamma(E^\perp) $. Furthermore, for all $ X \in \Gamma(E) $, $ Y \in \Gamma_\ell(E^\perp) $:
\[ g_T([\nabla f,X],Y) = g_T(\nabla f,[X,Y]) - X\big(g_T(\nabla f,Y)\big) = [X,Y](f) - X\big(Y(f)\big) = 0 \, . \]
\end{enumerate}
\end{proof}

\begin{definition} \label{bott}
Let $ \nabla $ denote the Levi-Civita connection of $ g $. The \emph{transverse Levi-Civita} or \emph{Bott connection} $ \nabla^T $ is the connection in the vector bundle $ E^\perp $ given by
\[ \nabla^T_X Y := \begin{cases} \left(\nabla_X Y\right)^\perp & , \; X \in \Gamma_\ell(E^\perp) \, , \\ [X,Y]^\perp &, \; X \in \Gamma_\ell(E) \, . \end{cases} \]
The condition $ \mc{L}_X g_T = 0 $ ensures that $ [X,Y]^\perp $ is indeed tensorial in $ X \in \Gamma_\ell(E) $. Note that if $ Y \in \mf{trans}(\mc{F}) $, then $ \nabla^T_X Y = [X,Y]^\perp = 0 $ for all $ X \in \Gamma_\ell(E) $. The connection $ \nabla^T $ is the unique metric and torsion-free connection in $ E^\perp $ \cite[Theorem 5.9]{Tond}, i.e. for all $ X \in \Gamma_\ell(TM) $ and $ Y,Z \in \Gamma_\ell(E^\perp) $:
\begin{align*}
X\big(g_T(Y,Z)\big) &= g_T(\nabla^T_X Y,Z) + g_T(Y, \nabla^T_X Z) \, , \\ \nabla^T_Y Z - \nabla^T_Z Y &= [Y,Z]^\perp \, . 
\end{align*}
Furthermore, $ \nabla^T $ may be characterized via a Koszul formula \cite[Proposition 1.7]{KT1}, i.e.~for all $ X, Z \in \Gamma_\ell(TM) $, $ Y \in \Gamma(E^\perp) $:
\begin{align*} \label{koszul}
2g_T(\nabla^T_X Y, Z) &= X\big(g_T(Y,Z)\big) + Y\big(g_T(Z,X)\big) - Z\big(g_T(X,Y)\big) \\
&\quad + g_T([X,Y],Z) + g_T([Z,X],Y) - g_T([Y,Z],X) \, .
\end{align*}
\end{definition}

\begin{definition}
Let $ f $ be a basic function. The \emph{transverse Hessian} $ \Hess_T f $ is the symmetric $ (0,2) $-tensor field given by
\[ \Hess_T f (X,Y) := g_T(\nabla^T_X \nabla f,Y) \, , \quad X,Y \in \Gamma_\ell(TM) \, . \]
Clearly, $ \iota_X \Hess_T f = 0 $ for all $ X \in \Gamma_\ell(E) $. The \emph{transverse Laplacian} $ \Delta_T f $ is defined as
\[ \Delta_T f := \tr_g \Hess_T f = \sum_i \Hess_T f(E_i,E_i) \, , \]
where $ E_i $ is a local $ g $-orthonormal frame. The \emph{transverse Riemann curvature tensor} $ R^T $ is given by
\[ R^T(X,Y)Z := \nabla^T_X \nabla^T_Y Z - \nabla^T_Y \nabla^T_X Z - \nabla^T_{[X,Y]} Z \, , \quad X,Y \in \Gamma_\ell(TM) , \, Z \in \Gamma_\ell(E^\perp) \, . \]
Again, $ \iota_X R^T = 0 $ for all $ X \in \Gamma_\ell(E) $ \cite[Proposition 3.6]{Tond}. As usual:
\[ R^T(X,Y,Z,V) := g_T\big(R^T(X,Y)Z,V\big) \, , \quad V \in TM \, . \]
Finally, the \emph{transverse Ricci curvature} $ \Ric^T $ is defined as
\[ \Ric^T (X,X) := \tr \big(Y \mapsto R^T(Y,X)X \big) = \sum_i R^T(E_i,X,X,E_i) \, , \quad X \in \Gamma_\ell(E^\perp) \, . \]
\end{definition}

\begin{remark} \label{curv}
It is well-known that a Riemannian foliation can be characterized equi- valently via local Riemannian submersions $ \phi: U \to N $ onto a Riemannian model space $ (N,g_N) $. The transverse Riemann curvature tensor $ R^T $ then reflects the Riemann curvature tensor $ R^N $ of the local model $ N $, as made precise by the following equation \cite[Equation 5.40]{Tond}:
\[ \phi_\ast R^T(X,Y)Z = R^N(\phi_\ast X, \phi_\ast Y)\phi_\ast Z \, , \quad X,Y,Z \in \Gamma_\ell (E^\perp) \, . \]
Likewise, $ \Ric^T $ mirrors the Ricci curvature tensor $ \Ric^N $ of $ N $, viz.
\[ \Ric^T(X,X) = \Ric^N(\phi_\ast X,\phi_\ast X) \circ \phi \, , \quad X \in \Gamma_\ell(E^\perp) \, . \]
Hence, if the Riemannian foliation $ (\mc{F},g) $ was simply given by one (global) Riemannian submersion $ \phi: M \to N $ onto a Riemannian manifold $ N $, then we could just apply the ordinary Bochner technique to $ N $ instead of the more complicated approach presented here. However, the advantage of a Bochner technique for foliations is that it also works if the model space (globally) is not as well-behaved as a smooth manifold, which is often a non-trivial condition. 
\end{remark}

From now on let $ n:= \dim M $, $ p:= \rk E $ and $ q := n-p $.

\begin{definition}
We call $ \mc{F} $ \emph{transversely orientable} if $ E^\perp $ is orientable. Suppose that $ M $ is orientable and $ \mc{F} $ is tranversely orientable. Then we shall orient $ M $ and $ E^\perp $ using their \emph{Riemannian volume forms} $ \mu $ and $ \mu_T $, respectively. This means we choose a local oriented orthonormal frame $ E_1, \ldots, E_n $ of $ TM $ such that $ E_{p+1}, \ldots, E_n $ is an oriented frame of $ E^\perp $ and require $ \mu(E_1,\ldots,E_n) = \mu_T(E_{p+1},\ldots,E_n) = 1 $. \\
If $ X \in \mf{fol}(\mc{F}) $ is a foliate vector field, then $ \iota_Y \mc{L}_X \mu_T = 0 $ for all $ Y \in \Gamma_\ell(E) $. Thus, $ \mc{L}_X \mu_T $ may be viewed as a section of the vector bundle $ \Lambda^q (E^\perp)^* $, which has rank one. Hence, we can define the \emph{transverse divergence} $ \Div_T X $ as the unique function which satisfies
\[ \mc{L}_X \mu_T = \Div_T X  \cdot \mu_T \, . \]
\end{definition}

\begin{lemma} \label{dlem}
For any transverse vector field $ X \in \mf{trans}(\mc{F}) $:
\[ \Div_T X = \tr \nabla^T X \, . \]
In particular, for any basic function $ f $:
\[ \Div_T \nabla f = \Delta_T f \, . \]
\end{lemma}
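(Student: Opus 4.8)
The plan is to evaluate the defining equation $ \mc{L}_X \mu_T = \Div_T X \cdot \mu_T $ on a local oriented orthonormal frame $ E_{p+1}, \ldots, E_n $ of $ E^\perp $ and to trade the resulting Lie brackets for $ \nabla^T $ using the structure equations of \Cref{bott}. Since $ X \in \mf{trans}(\mc{F}) \subset \mf{fol}(\mc{F}) $ is foliate, $ \mc{L}_X \mu_T $ is horizontal and may be evaluated on $ E^\perp $; together with $ \mu_T(E_{p+1}, \ldots, E_n) = 1 $ this gives $ \Div_T X = (\mc{L}_X \mu_T)(E_{p+1}, \ldots, E_n) $. First I would expand the right-hand side with the tensorial Lie-derivative formula for a $ q $-form:
\[ (\mc{L}_X \mu_T)(E_{p+1}, \ldots, E_n) = X\big(\mu_T(E_{p+1}, \ldots, E_n)\big) - \sum_{a=p+1}^n \mu_T(E_{p+1}, \ldots, [X, E_a], \ldots, E_n) \, . \]
The first summand vanishes because $ \mu_T(E_{p+1}, \ldots, E_n) \equiv 1 $. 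In each term of the sum, $ \mu_T $ annihilates $ E $ and is alternating, so writing $ [X, E_a]^\perp = \sum_b g_T([X,E_a]^\perp, E_b) E_b $ only the $ b = a $ contribution survives, yielding $ \Div_T X = -\sum_a g_T([X, E_a]^\perp, E_a) $.

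Next I would rewrite $ [X, E_a]^\perp $ via the torsion-freeness of $ \nabla^T $. As both $ X $ and each $ E_a $ lie in $ \Gamma(E^\perp) $, the second structure equation of \Cref{bott} gives $ [X, E_a]^\perp = \nabla^T_X E_a - \nabla^T_{E_a} X $. Substituting and splitting the sum,
\[ \Div_T X = \sum_a g_T(\nabla^T_{E_a} X, E_a) - \sum_a g_T(\nabla^T_X E_a, E_a) \, . \]
The second sum vanishes by metricity of $ \nabla^T $, since $ 2\, g_T(\nabla^T_X E_a, E_a) = X\big(g_T(E_a, E_a)\big) = X(1) = 0 $. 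The first sum equals $ \tr \nabla^T X $, where the omitted contributions from $ E_1, \ldots, E_p \in \Gamma(E) $ are zero because $ g_T $ kills $ E $; this proves the main identity.

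For the supplementary statement, \Cref{grad}~b) ensures that a basic function $ f $ satisfies $ \nabla f \in \mf{trans}(\mc{F}) $, so the first part applies with $ X = \nabla f $, giving $ \Div_T \nabla f = \tr \nabla^T \nabla f = \sum_a g_T(\nabla^T_{E_a} \nabla f, E_a) = \sum_a \Hess_T f(E_a, E_a) = \Delta_T f $. I do not expect a genuine obstacle, as the result is essentially a bookkeeping computation; the step demanding the most care is the passage from the Lie bracket to the covariant derivative, where one must check that both arguments lie in $ E^\perp $ so that the torsion-free identity is applicable, and that the metric and alternating properties are invoked on the correct slots.
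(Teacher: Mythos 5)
Your proposal is correct and follows essentially the same route as the paper: evaluate $\mc{L}_X\mu_T$ on an oriented orthonormal frame of $E^\perp$ to get $-\sum_a g_T([X,E_a],E_a)$, then identify this with $\tr\nabla^T X$ via torsion-freeness and metricity of $\nabla^T$ (the paper computes both sides down to the common bracket expression, while you transform one side into the other — a purely cosmetic difference). Your justifications for the two reductions (horizontality of $\mc{L}_X\mu_T$, vanishing of the $\Gamma(E)$-contributions to the trace) are also valid, matching or harmlessly substituting for the paper's.
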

\begin{proof}
Since $ X $ is transverse, we have $ \nabla^T_Y X = 0 $ for all $ Y \in \Gamma_\ell(E) $. If $ E_{p+1}, \ldots, E_n $ is an oriented local orthonormal frame of $ E^\perp $, then:
\begin{align*}
(\mc{L}_X\mu_T)(E_{p+1},\ldots,E_n) &= X\big(\mu_T(E_{p+1},\ldots,E_n)\big) - \sum_i \mu_T(E_{p+1},\ldots,[X,E_i],\ldots,E_n) \\
&= - \sum_i g_T([X,E_i],E_i) \, \mu_T(E_{p+1},\ldots,E_n) =  - \sum_i g_T([X,E_i],E_i) \, .
\end{align*}
On the other hand:
\[ \tr \nabla^T X = \sum_i g_T(\nabla_{E_i}^T X, E_i) = \sum_i g_T(\nabla_X^T E_i, E_i) - g_T([X,E_i],E_i) =  - \sum_i g_T([X,E_i],E_i) \, . \]
\end{proof}

\begin{definition}
A foliation $ \mc{F} $ is called \emph{taut} if there exists a Riemannian metric $ g $ on $ M $ such that the leaves of $ \mc{F} $ are minimal submanifolds of $ M $ with respect to $ g $. If a Riemannian foliation is taut, then $ g $ may be chosen to be bundle-like \cite[Proposition~7.6]{Tond}, \linebreak in which case $ (\mc{F},g) $ is called \emph{harmonic}.
\end{definition}

One key tool for us will be following transverse divergence \mbox{theorem \cite[Theorem 4.35]{Tond}:}

\begin{theorem} \label{div}
Let $ M $ be a closed oriented manifold, endowed with a transversely orien- ted harmonic Riemannian foliation $ (\mc{F}, g) $. Then for any foliate vector field \mbox{$ X \in \mf{fol}(M) $:}
\[ \int_M \Div_T X \cdot \mu = 0 \, . \]
\end{theorem}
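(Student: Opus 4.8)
The plan is to derive the transverse identity from the ordinary divergence theorem on $M$ and to isolate the single error term that the harmonicity hypothesis annihilates. Since $M$ is oriented and $\mc{F}$ is transversely oriented, the leafwise bundle $E$ is oriented as well, so there is a \emph{characteristic form} $\chi$: the leafwise volume form, i.e.\ the $p$-form with $\iota_Y\chi = 0$ for all $Y\in\Gamma_\ell(E^\perp)$ and $\chi(E_1,\ldots,E_p)=1$ in an adapted oriented orthonormal frame. Regarding $\mu_T$ as a $q$-form on $M$ through $\iota_Y\mu_T=0$ for $Y\in\Gamma_\ell(E)$, one has $\mu = \chi\wedge\mu_T$. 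Applying the Leibniz rule to the Lie derivative and using the defining relation $\mc{L}_X\mu_T = \Div_T X\cdot\mu_T$ (this is where $X$ being foliate is used) yields
\[ \mc{L}_X\mu = \mc{L}_X\chi\wedge\mu_T + \Div_T X\cdot\mu. \]
Morally this reflects the pointwise splitting $\Div X = \Div_T X + \Div_E X$ of the ordinary divergence into its transverse and leafwise parts, so the theorem should amount to the vanishing of the integrated leafwise divergence.

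Since $\mu$ is a top form, Cartan's formula gives $\mc{L}_X\mu = d\iota_X\mu$, whence $\int_M\mc{L}_X\mu = 0$ by Stokes' theorem, $M$ being closed. It thus remains to show $\int_M\mc{L}_X\chi\wedge\mu_T = 0$, and I would expand $\mc{L}_X\chi = d\iota_X\chi + \iota_X d\chi$ by Cartan into two pieces. The first piece $\int_M(d\iota_X\chi)\wedge\mu_T$ I would handle by integrating by parts: as $\int_M d(\iota_X\chi\wedge\mu_T)=0$, it equals $\pm\int_M\iota_X\chi\wedge d\mu_T$, and a local computation shows the integrand vanishes identically. Indeed, since $\iota_X\chi$ is a leafwise $(p-1)$-form, in the wedge only the component of $d\mu_T$ on one leaf direction and all $q$ transverse directions can survive; evaluating it in an adapted orthonormal frame reduces every term to expressions of the form $g_T(\nabla^T_{E_j}E_{p+b},E_{p+b}) = \tfrac12 E_j\big(g_T(E_{p+b},E_{p+b})\big)=0$, which vanish because $\nabla^T$ is metric. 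This uses only the bundle-like structure, not harmonicity.

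The entire content of the theorem then sits in the second piece $\int_M(\iota_X d\chi)\wedge\mu_T$, which I expect to be the main obstacle. As a top form it is detected only by $d\chi(X,E_1,\ldots,E_p)$; writing $X=V+H$ with $V\in\Gamma(E)$ and $H\in\Gamma(E^\perp)$, the purely leafwise part $d\chi(V,E_1,\ldots,E_p)$ vanishes because it is the leafwise exterior derivative of the top-degree leaf volume form, while the transverse part computes to $d\chi(H,E_1,\ldots,E_p)=\sum_i g(H,\nabla_{E_i}E_i)=g(H,\tau)$, where $\tau=\sum_i(\nabla_{E_i}E_i)^\perp$ is the mean curvature vector of the leaves. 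Equivalently this is Rummler's identity $d\chi=-\kappa\wedge\chi+\varphi$ with $\varphi$ of transverse filtration $\ge 2$, giving $(\iota_X d\chi)\wedge\mu_T=\pm\kappa(X)\,\mu$ with $\kappa=\tau^\flat$ the mean curvature form. Here the hypothesis that $(\mc{F},g)$ is harmonic — that the leaves are minimal — enters decisively: it forces $\tau=0$, hence $\kappa=0$, killing this term. Combining the two pieces gives $\int_M\mc{L}_X\chi\wedge\mu_T=0$ and therefore $\int_M\Div_T X\cdot\mu=0$. The conceptual crux to make rigorous is precisely that the leaf mean curvature is the sole obstruction to the transverse divergence theorem and that minimality of the leaves annihilates it.
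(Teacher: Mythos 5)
Your proposal is correct, but note that the paper itself does not prove this statement at all: it is imported by citation from Tondeur (\cite[Theorem 4.35]{Tond}), so there is no in-paper proof to compare against. What you have written is essentially a faithful reconstruction of the classical argument in that reference: decompose $\mu = \chi\wedge\mu_T$ with $\chi$ the characteristic form, use $\mathcal{L}_X\mu_T = \Div_T X\cdot\mu_T$ (valid as an identity of $q$-forms on $M$ precisely because $X$ is foliate), kill $\int_M\mathcal{L}_X\mu$ by Cartan and Stokes, and then show that $\int_M\mathcal{L}_X\chi\wedge\mu_T$ splits into a piece that vanishes for any bundle-like metric and a piece equal to $\pm\int_M\kappa(X)\,\mu$, which is exactly what Rummler's formula isolates and what harmonicity ($\kappa=0$) annihilates. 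Two small remarks. First, your treatment of the piece $\int_M (d\iota_X\chi)\wedge\mu_T$ can be streamlined: $\mu_T$ is holonomy invariant ($\mathcal{L}_Y g_T=0$ for $Y\in\Gamma_\ell(E)$ implies $\mathcal{L}_Y\mu_T=0$), so $\mu_T\in\Omega_B^q(\mathcal{F})$, hence $d\mu_T\in\Omega_B^{q+1}(\mathcal{F})=0$; no frame computation or integration by parts is needed there, as $(d\iota_X\chi)\wedge\mu_T = d(\iota_X\chi\wedge\mu_T)$ is exact. (Your frame computation proves the weaker statement that the relevant $(q,1)$-component of $d\mu_T$ vanishes, which also suffices.) Second, your sign in $d\chi(H,E_1,\ldots,E_p)=\sum_i g(H,\nabla_{E_i}E_i)$ should be negative, but since the whole point is that this term vanishes when the leaves are minimal, the slip is harmless. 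With those conventions pinned down, the argument is complete and rigorous.
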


\section{Basic Hodge Theory} \label{sHodge}

Let $ M $ be a smooth manifold, endowed with a foliation $ \mc{F} $ of codimension $ q $ defined by an integrable subbundle $ E \subset TM $.

\begin{definition}
A differential $ k $-form $ \omega \in \Omega^k(M) $ is called \emph{basic} if $ \iota_X \omega = 0 $ as well as $ \mc{L}_X \omega = \iota_X d\omega = 0 $ for all $ X \in \Gamma_\ell(E) $. Note that for $ f \in \Omega^0(M) $ this coincides with the \autoref{vfs} of a basic function. \\
If $ \omega $ is basic, then so is $ d\omega $, meaning the basic differential forms constitute a subcomplex $ \Omega_B(\mc{F}) $ of the de Rham complex $ \Omega(M) $. Clearly, $ \Omega_B^k(\mc{F}) = 0 $ for $ k > q $. We denote the restriction of $ d $ to $ \Omega_B(\mc{F}) $ by $ d_B $. The cohomology ring of the complex $ (\Omega_B(\mc{F}), d_B) $ is called the \emph{basic cohomology of $ \mc{F} $} and will be denoted by $ H_B(\mc{F}) $. The \emph{basic Betti numbers of $ \mc{F} $} are defined as $ b_k(\mc{F}) := \dim H_B^k(\mc{F}) $.
\end{definition}

The inclusion $ \Omega_B^1(\mc{F}) \to \Omega^1(M) $ induces an injective map $ H_B^1(\mc{F}) \to H^1(M) $ \cite[Proposition 4.1]{Tond}. Furthermore, if $ M $ is closed and $ (\mc{F},g_T) $ is a Riemannian foliation on $ M $, then $ b_k(\mc{F}) < \infty $ for $ k=0, \ldots, q $ \cite[Chapter 4]{Tond}. \\

From now on we assume that $ M $ is closed and oriented, $ (\mc{F},g_T) $ is a transversely oriented Riemannian foliation on $ M $ and $ g $ is a bundle-like metric compatible with $ g_T $. As usual, the metric $ g $ induces an inner product on $ \Lambda^k T_x^*M $ for every $ x \in M $. We let $ \mu \in \Omega^n(M) $ denote the Riemannian volume form of $ (M,g) $ and consider the inner product $ \langle \cdot, \cdot \rangle $ on $ \Omega^k(M) $ given by
\[ \langle \omega,\omega'\rangle := \int_M g(\omega, \omega') \cdot \mu \, , \quad \omega, \omega' \in \Omega^k(M) \, . \]
We write $ \langle \cdot, \cdot\rangle_B $ for the restriction of $ \langle \cdot, \cdot\rangle $ to the subspace $ \Omega_B^k(\mc{F}) \subset \Omega^k(M) $.

\begin{definition}
The \emph{basic codifferential} $ \delta_B: \Omega^k_B(\mc{F}) \to \Omega_B^{k-1}(\mc{F}) $ is the formal adjoint of $ d_B: \Omega_B^{k-1}(\mc{F}) \to \Omega_B^k(\mc{F}) $ with respect to $ \langle \cdot, \cdot\rangle_B $, viz.
\[ \langle d_B\omega, \eta\rangle_B = \langle \omega, \delta_B \eta\rangle_B \, , \quad \omega \in \Omega_B^{k-1}(\mc{F}), \, \eta \in \Omega^k_B(\mc{F}) \, . \]
The \emph{basic Laplacian} is given by
\[ \Delta_B := d_B \delta_B + \delta_B d_B: \Omega_B^k(\mc{F}) \to \Omega_B^k(\mc{F}) \, . \]
A basic form $ \omega \in \Omega_B^k(\mc{F}) $ is called \emph{basic harmonic} if $ \Delta_B \omega = 0 $. The vector space of all basic harmonic $ k $-forms will be denoted by $ \mc{H}_B^k(\mc{F}) $.
\end{definition}

Beware that $ \Delta_B $ is not the restriction of the ordinary Laplacian $ \Delta = d\delta+\delta d $ to $ \Omega_B^k(\mc{F}) $ \cite[Equation 7.28]{Tond}. On basic functions $ \Delta_B $ differs from the previously defined transverse Laplacian $ \Delta_T $ by a sign, see \autoref{laps}.

By definition of $ \delta_B $, every $ \omega \in \Omega_B^k(\mc{F}) $ satisfies
\[ \langle \Delta_B\omega,\omega\rangle_B = \langle d_B\delta_B\omega,\omega\rangle_B + \langle \delta_B d_B\omega,\omega\rangle_B = \langle d_B\omega,d_B\omega \rangle_B + \langle \delta_B \omega, \delta_B \omega \rangle_B \, . \]
Therefore $ \Delta_B \omega = 0 $ if and only if both $ d_B \omega = 0 $ and $ \delta_B \omega = 0 $. In particular, we have a natural map $ \mc{H}_B^k(\mc{F}) \to H_B^k(\mc{F}) $. In case the bundle-like metric is chosen appropriately, there is the following basic Hodge theorem \cite[Theorem 7.51]{Tond}:

\begin{theorem} \label{Hodge}
Let $ M $ be a closed oriented manifold, endowed with a transversely orien- ted harmonic Riemannian foliation $ (\mc{F}, g) $. Then the natural map $ \mc{H}_B^k(\mc{F}) \to H_B^k(\mc{F}) $ is an isomorphism.
\end{theorem}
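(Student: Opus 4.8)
The plan is to reduce the statement to an $ L^2 $-orthogonal \emph{basic Hodge decomposition}
\[ \Omega_B^k(\mc{F}) = \mc{H}_B^k(\mc{F}) \oplus \im d_B \oplus \im \delta_B \, , \]
from which the assertion about the natural map $ \mc{H}_B^k(\mc{F}) \to H_B^k(\mc{F}) $ follows by purely formal arguments. Granting the decomposition, injectivity is immediate: if a basic harmonic $ \omega $ is basic-exact, say $ \omega = d_B \eta $, then $ \langle \omega, \omega \rangle_B = \langle d_B \eta, \omega \rangle_B = \langle \eta, \delta_B \omega \rangle_B = 0 $ since $ \delta_B \omega = 0 $, so $ \omega = 0 $; hence no non-zero basic harmonic form is a coboundary. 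For surjectivity I would take a $ d_B $-closed basic form $ \omega $, decompose it as $ \omega = \omega_{\mc{H}} + d_B \alpha + \delta_B \beta $, and observe that $ 0 = d_B \omega = d_B \delta_B \beta $ forces $ \langle d_B \delta_B \beta, \beta \rangle_B = \langle \delta_B \beta, \delta_B \beta \rangle_B = 0 $, whence $ \delta_B \beta = 0 $. Thus $ \omega = \omega_{\mc{H}} + d_B \alpha $ is cohomologous to the basic harmonic form $ \omega_{\mc{H}} $, so every basic class has a harmonic representative.

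The genuine work is therefore the establishment of the decomposition, and this is where the analysis—and the hypothesis of harmonicity—enters. The operator $ \Delta_B $ is formally self-adjoint for $ \langle \cdot, \cdot \rangle_B $ by the very definition of $ \delta_B $, and the computation displayed just before \autoref{Hodge} already identifies $ \mc{H}_B^k(\mc{F}) $ with $ \ker d_B \cap \ker \delta_B $ inside $ \Omega_B^k(\mc{F}) $. To split off this kernel orthogonally with closed complementary range, I would exhibit $ \Delta_B $ as a \emph{transversally elliptic} operator: using the local Riemannian submersions $ \phi\colon U \to N $ from \autoref{curv}, a basic form is locally the pullback of a form on the model $ N $, where $ \Delta_B $ agrees up to lower-order terms with the ordinary elliptic Hodge Laplacian of $ N $. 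Completing $ \Omega_B^k(\mc{F}) $ in suitable basic Sobolev norms and applying the Fredholm theory for such operators then yields that $ \mc{H}_B^k(\mc{F}) = \ker \Delta_B $ is finite-dimensional and that $ \im \Delta_B $ is closed, giving $ \Omega_B^k(\mc{F}) = \mc{H}_B^k(\mc{F}) \oplus \im \Delta_B $ and, after factoring $ \Delta_B = d_B \delta_B + \delta_B d_B $, the three-term decomposition above.

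The role of the harmonicity assumption is to guarantee that $ \delta_B $ is genuinely the $ \langle \cdot, \cdot \rangle_B $-adjoint of $ d_B $ with no correction term, so that $ \Delta_B $ maps basic forms to basic forms self-adjointly. Concretely, the required integrations by parts over $ M $ are controlled by the transverse divergence \autoref{div}, which holds precisely because the leaves are minimal and the mean-curvature form vanishes. Without tautness the formal adjoint of $ d_B $ acquires a mean-curvature contribution, the decomposition fails to be orthogonal, and $ \ker \Delta_B $ need no longer compute basic cohomology.

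I expect the main obstacle to be the Fredholm and regularity analysis of $ \Delta_B $. Transverse ellipticity does not by itself produce a Hodge decomposition, because basic forms are not the smooth sections of a finite-rank bundle to which the classical elliptic package applies, and the leaf space may be genuinely singular; one must instead build the basic Sobolev spaces intrinsically and prove a priori estimates, closed range, and elliptic regularity for $ \Delta_B $ on them. This functional-analytic core is substantial—it is the foliated Hodge theory underlying \cite{Tond}—and, in the interest of brevity, I would invoke it as a black box rather than reconstruct it, restricting the argument here to the formal consequences recorded in the first two paragraphs.
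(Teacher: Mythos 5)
The paper offers no internal proof of \autoref{Hodge} for you to be compared against: the theorem is imported wholesale from the literature, cited as \cite[Theorem 7.51]{Tond}. Your proposal is, in outline, the standard proof that sits behind that citation. The two formal paragraphs are correct: granting the orthogonal decomposition $ \Omega_B^k(\mc{F}) = \mc{H}_B^k(\mc{F}) \oplus \im d_B \oplus \im \delta_B $, your injectivity and surjectivity arguments are exactly right, and they use only the adjointness defining $ \delta_B $ together with the identification $ \mc{H}_B^k(\mc{F}) = \ker d_B \cap \ker \delta_B $ already recorded in the paper. The genuine content --- finite-dimensionality of $ \ker \Delta_B $, closed range, regularity --- you explicitly defer to the foliated elliptic theory (transversally elliptic operators, basic Sobolev spaces), i.e.\ to precisely the machinery the paper's citation encapsulates. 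So your write-up is a correctly structured reduction rather than a self-contained proof; since the paper itself also treats the statement as a black box, that is a reasonable parity, and the formal layer you do supply is sound.

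One substantive correction, though, to your third paragraph on the role of harmonicity. Since the paper \emph{defines} $ \delta_B $ as the formal adjoint of $ d_B $ with respect to $ \langle \cdot,\cdot\rangle_B $, this adjoint never ``acquires a mean-curvature contribution'' --- adjointness holds by definition for any bundle-like metric --- and in fact, with this definition of $ \Delta_B $, the basic Hodge theorem is known to hold \emph{without} any tautness hypothesis (a theorem of \textsc{Park} and \textsc{Richardson} on the basic Laplacian of an arbitrary Riemannian foliation). What harmonicity actually buys is computability: it makes the abstract adjoint $ \delta_B $ coincide with the naive transverse codifferential $ \pm\bar{\ast}\, d_B\, \bar{\ast} $ --- equivalently, $ \delta_B \omega_X = -\Div_T X $ as in \autoref{codif}, via the transverse divergence \autoref{div} --- so that $ \Delta_B $ is the transverse Laplacian to which the transversally elliptic package applies most directly, and so that the pointwise identities exploited later in the paper hold without mean-curvature terms. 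It is the naive codifferential, not $ \delta_B $, that picks up the correction in the non-taut case, and it is the kernel of the \emph{naive} transverse Laplacian that can then fail to compute basic cohomology. This misattribution does not damage your argument, since you only ever invoke the adjointness property, but the stated rationale for the hypothesis should be amended accordingly.
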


In preparation for the Bochner technique in the next section, we now specialize to one-forms: Recall the usual one-to-one correspondence between vector fields $ X \in \Gamma(TM) $ and their $ g $-dual one-forms $ \omega_X := \iota_X g \in \Omega^1(M) $. One easily checks that $ X \in \mf{trans}(\mc{F}) $ if and only if $ \omega_X \in \Omega_B^1(\mc{F}) $.

\begin{lemma} \label{symm}
We have $ d_B\omega_X = 0 $ if and only if $ \nabla^T X $ is $ g_T $-symmetric, i.e.
\[ g_T(\nabla^T_Y X,Z) = g_T(Y,\nabla^T_Z X) \, , \quad Y, Z \in \Gamma_\ell(TM) \, . \]
\end{lemma}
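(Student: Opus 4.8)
The plan is to reduce the statement to a pointwise computation on $E^\perp$ and then expand the exterior derivative using the metric and torsion-free properties of $\nabla^T$ recorded in \autoref{bott}. Since $X \in \mf{trans}(\mc{F})$, the dual form $\omega_X$ is basic, so $d_B\omega_X = d\omega_X$; moreover, being basic, $d\omega_X$ is determined by its values on $\Gamma_\ell(E^\perp)$, i.e.~$d_B\omega_X = 0$ holds if and only if $d\omega_X(Y,Z) = 0$ for all $Y,Z \in \Gamma_\ell(E^\perp)$. Both sides of the asserted identity are tensorial in $Y$ and $Z$, so it suffices to verify it after choosing arbitrary local sections $Y,Z \in \Gamma_\ell(E^\perp)$.

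The core step is to apply the invariant formula
\[ d\omega_X(Y,Z) = Y\big(\omega_X(Z)\big) - Z\big(\omega_X(Y)\big) - \omega_X([Y,Z]) \, . \]
Because $g$ is bundle-like and $X,Y,Z \in \Gamma_\ell(E^\perp)$, one has $\omega_X(Y) = g(X,Y) = g_T(X,Y)$, similarly for $Z$, and $\omega_X([Y,Z]) = g_T(X,[Y,Z]^\perp)$. I would then expand the two derivative terms via the metric property $Y(g_T(X,Z)) = g_T(\nabla^T_Y X, Z) + g_T(X, \nabla^T_Y Z)$ and its analogue with $Y,Z$ interchanged. Subtracting, the symmetric gradient terms combine into $g_T(X, \nabla^T_Y Z - \nabla^T_Z Y)$, which by the torsion-free identity $\nabla^T_Y Z - \nabla^T_Z Y = [Y,Z]^\perp$ exactly cancels the bracket term $g_T(X,[Y,Z]^\perp)$. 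What remains is
\[ d\omega_X(Y,Z) = g_T(\nabla^T_Y X, Z) - g_T(Y, \nabla^T_Z X) \, , \]
using the symmetry of $g_T$ on the second summand.

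From this formula the equivalence is immediate: $d_B\omega_X = 0$ iff the right-hand side vanishes for all $Y,Z \in \Gamma_\ell(E^\perp)$, which is precisely the $g_T$-symmetry of $\nabla^T X$. To promote the condition from $E^\perp$ to all of $\Gamma_\ell(TM)$ as stated, I would invoke the remark in \autoref{bott} that $\nabla^T_Y X = [Y,X]^\perp = 0$ whenever $Y \in \Gamma_\ell(E)$ and $X \in \mf{trans}(\mc{F})$; hence both sides of the symmetry relation vanish automatically as soon as $Y$ or $Z$ lies in $E$, so extending the test vectors to $TM$ adds no new content. I do not expect a genuine obstacle here — the argument is a direct unwinding of definitions — but the step requiring the most care is the bookkeeping of where each object lives, namely using the bundle-like identity to replace $g$ by $g_T$ and ensuring the connection formulas of \autoref{bott} are only applied to $E^\perp$-valued arguments.
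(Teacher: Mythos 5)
Your proof is correct, and it reaches the goal by a different route than the paper. The paper's proof is a one-liner: it rewrites the Koszul formula of \autoref{bott} as
\[ 2g_T(\nabla^T_Y X,Z) = (d_B\omega_X)(Y,Z) + (\mc{L}_X g_T)(Y,Z) \, , \quad Y,Z \in \Gamma_\ell(TM) \, , \]
so that $ d_B\omega_X $ and $ \mc{L}_X g_T $ appear as the skew-symmetric and symmetric parts of $ 2g_T(\nabla^T_{\cdot}\, X,\cdot) $, and the claim follows at once. What you prove instead, by expanding the invariant formula for $ d\omega_X $ with metric compatibility and torsion-freeness, is precisely the skew-symmetrization of that identity,
\[ d\omega_X(Y,Z) = g_T(\nabla^T_Y X,Z) - g_T(Y,\nabla^T_Z X) \, , \quad Y,Z \in \Gamma_\ell(E^\perp) \, , \]
followed by the (correct) observation that both sides of the symmetry condition vanish whenever an argument lies in $ E $ — since $ \nabla^T_Y X = [Y,X]^\perp = 0 $ for $ Y \in \Gamma_\ell(E) $ and $ E = \ker g_T $ — so nothing is lost in passing between $ E^\perp $ and all of $ TM $. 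Your version is more self-contained: it uses only the two structure equations of $ \nabla^T $ recorded in \autoref{bott} and never invokes the Koszul formula or the Lie derivative, at the cost of the explicit reduction to $ E^\perp $ and back, which the paper's identity (valid for all $ Y,Z \in \Gamma_\ell(TM) $) sidesteps entirely. The paper's formulation also has a later payoff: the same decomposition immediately yields the companion fact, used in \Cref{sBochner}, that $ X \in \mf{trans}(\mc{F}) $ is transverse Killing if and only if $ \nabla^T X $ is $ g_T $-skew-symmetric; with your approach that statement would require a separate, analogous computation for $ \mc{L}_X g_T $.
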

\begin{proof}
The Koszul formula from \autoref{bott} can be rewritten as
\[ 2g_T(\nabla^T_Y X,Z) = (d_B\omega_X)(Y,Z) + (\mc{L}_X g_T)(Y,Z) \, , \quad Y,Z \in \Gamma_\ell(TM) \, . \]
Since $ d_B\omega_X $ is skew-symmetric and $ \mc{L}_X g_T $ is symmetric, this yields the claim.
\end{proof}

\begin{lemma} \label{codif}
If $ (\mc{F},g) $ is harmonic, then $ \delta_B \omega_X = - \Div_T X $.
\end{lemma}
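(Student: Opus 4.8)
The plan is to test the claimed identity against basic functions and reduce it to the transverse divergence theorem (\autoref{div}). Since $ X \in \mf{trans}(\mc{F}) $, its dual one-form $ \omega_X $ is basic, so $ \delta_B\omega_X \in \Omega_B^0(\mc{F}) $ is a basic function. I would first observe that $ \Div_T X $ is basic as well: because $ X $ is transverse, hence foliate, and the transverse volume form $ \mu_T $ satisfies $ \mc{L}_Y\mu_T = 0 $ for $ Y \in \Gamma_\ell(E) $ (a consequence of $ \mc{L}_Y g_T = 0 $), a short computation using $ \mc{L}_Y\mc{L}_X\mu_T = \mc{L}_{[Y,X]}\mu_T + \mc{L}_X\mc{L}_Y\mu_T $ together with $ [Y,X] \in \Gamma(E) $ gives $ Y(\Div_T X) = 0 $. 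Thus both sides of the asserted equation lie in $ \Omega_B^0(\mc{F}) $, and since $ \langle\cdot,\cdot\rangle_B $ restricts to the non-degenerate $ L^2 $-product on basic functions, it suffices to prove $ \langle f, \delta_B\omega_X\rangle_B = \langle f, -\Div_T X\rangle_B $ for every basic $ f $.

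By the defining property of the adjoint $ \delta_B $, the left-hand side equals $ \langle d_B f, \omega_X\rangle_B $. Since $ f $ is basic, $ d_B f = df = \omega_{\nabla f} $, so $ g(d_B f, \omega_X) = g(\nabla f, X) = X(f) $ and hence $ \langle d_B f, \omega_X\rangle_B = \int_M X(f)\,\mu $. The core of the proof is therefore the integration-by-parts identity $ \int_M X(f)\,\mu = -\int_M f\,\Div_T X\,\mu $.

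To obtain it I would use a product rule for the transverse divergence. As $ f $ is basic and $ X $ is transverse, the vector field $ fX $ is again transverse: it lies in $ \Gamma(E^\perp) $ and is foliate because $ [fX,Y] = f[X,Y] - Y(f)X \in \Gamma(E) $ for $ Y \in \Gamma_\ell(E) $. Using $ \Div_T = \tr\nabla^T $ from \autoref{dlem} and $ \nabla^T_{E_i}(fX) = E_i(f)X + f\nabla^T_{E_i}X $ in a transverse orthonormal frame $ E_{p+1},\ldots,E_n $, together with $ \nabla f \in E^\perp $ (\autoref{grad}), one computes $ \Div_T(fX) = g_T(\nabla f, X) + f\Div_T X = X(f) + f\Div_T X $. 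Applying the transverse divergence theorem (\autoref{div}) to the foliate field $ fX $ then yields $ 0 = \int_M \Div_T(fX)\,\mu = \int_M X(f)\,\mu + \int_M f\,\Div_T X\,\mu $, which is exactly the desired identity.

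Combining the last two paragraphs gives $ \langle f, \delta_B\omega_X\rangle_B = -\int_M f\,\Div_T X\,\mu = \langle f, -\Div_T X\rangle_B $ for all basic $ f $; taking $ f = \delta_B\omega_X + \Div_T X $ forces this difference to vanish, proving $ \delta_B\omega_X = -\Div_T X $. I expect the main obstacle to be the two bookkeeping points that make the adjoint argument legitimate: verifying that $ \Div_T X $ is basic, so that the conclusion is genuinely an identity inside $ \Omega_B^0(\mc{F}) $, and establishing the product rule for $ \Div_T $ with the correct sign. Note that harmonicity of $ (\mc{F},g) $ enters only through the hypotheses of \autoref{div}.
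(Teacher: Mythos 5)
Your proof is correct and follows essentially the same route as the paper: pair against basic functions, use the product rule $\Div_T(fX) = X(f) + f\,\Div_T X$ from \autoref{dlem}, and apply the transverse divergence theorem (\autoref{div}) to the foliate field $fX$. The only difference is that you make explicit two points the paper leaves implicit — that $\Div_T X$ is itself basic and that the $L^2$-pairing on $\Omega_B^0(\mc{F})$ is non-degenerate — which is a welcome but not essential addition.
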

\begin{proof}
By definition of $ \delta_B $, we need to show that for all basic functions $ f \in \Omega_B^0(\mc{F}) $:
\[ \int_M g(d_B f, \omega_X) \cdot \mu = - \int_M f \cdot \Div_T X \cdot \mu \, . \]
Using \autoref{dlem} we calculate:
\[ \Div_T (f \cdot X) = f \cdot \Div_T X + g(\nabla f,X) = f \cdot \Div_T X + g(d_Bf,\omega_X) \, . \]
If we integrate over $ M $, then the left-hand side vanishes by \autoref{div}, since $ f \cdot X $ is foliate and $ (\mc{F},g) $ is harmonic, and the claim follows.
\end{proof}

\begin{remark} \label{laps}
\cref{dlem,codif} imply that for all basic functions $ f $:
\[ \Delta_B f = \delta_B d_B f = \delta_B \omega_{\nabla f} = - \Div_T \nabla f = - \Delta_T f \, .  \]
\end{remark}

\begin{cordef} \label{cordef}
If $ (\mc{F},g) $ is harmonic, then $ \omega_X $ is basic harmonic if and only if $ \nabla^T X $ is $ g_T $-symmetric and $ \Div_T X = 0 $. In this case we call $ X $ \emph{basic harmonic}.
\end{cordef}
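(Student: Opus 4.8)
The plan is to read this off directly from the two preceding lemmas together with the splitting of the basic Laplacian recorded just before the definition of basic harmonicity. Throughout, $ X \in \mf{trans}(\mc{F}) $, so that $ \omega_X \in \Omega_B^1(\mc{F}) $ and the question of whether $ \omega_X $ is basic harmonic is meaningful. First I would recall the identity
\[ \langle \Delta_B \omega_X, \omega_X \rangle_B = \langle d_B \omega_X, d_B \omega_X \rangle_B + \langle \delta_B \omega_X, \delta_B \omega_X \rangle_B \, . \]
Because $ \langle \cdot, \cdot \rangle_B $ is positive definite, the right-hand side vanishes exactly when $ d_B\omega_X = 0 $ and $ \delta_B\omega_X = 0 $ simultaneously; hence $ \omega_X $ is basic harmonic precisely when both equations hold.

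It then remains to restate each of these two vanishing conditions in terms of the Bott connection. By \autoref{symm}, $ d_B\omega_X = 0 $ is equivalent to $ \nabla^T X $ being $ g_T $-symmetric. For the codifferential I would invoke harmonicity of $ (\mc{F},g) $: \autoref{codif} yields $ \delta_B\omega_X = -\Div_T X $, so that $ \delta_B\omega_X = 0 $ if and only if $ \Div_T X = 0 $. Substituting both equivalences into the previous paragraph gives the asserted characterization, and the concluding sentence is merely a definition.

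Since each step is a citation of an already established lemma, I do not expect a genuine obstacle. The only ingredient doing real work is harmonicity, entering through \autoref{codif} (and, behind it, the transverse divergence \autoref{div}); without it the clean identification $ \delta_B\omega_X = -\Div_T X $ breaks down, so the equivalence governing $ \Div_T X $ would need to be reconsidered.
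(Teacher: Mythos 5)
Your proposal is correct and follows exactly the paper's intended derivation: the identity $\langle \Delta_B\omega,\omega\rangle_B = \langle d_B\omega,d_B\omega\rangle_B + \langle \delta_B\omega,\delta_B\omega\rangle_B$ (recorded in the paper just before the basic Hodge theorem) gives that $\omega_X$ is basic harmonic iff $d_B\omega_X = 0$ and $\delta_B\omega_X = 0$, and then \autoref{symm} and \autoref{codif} translate these two conditions into $g_T$-symmetry of $\nabla^T X$ and $\Div_T X = 0$, respectively. You also correctly identify harmonicity of $(\mc{F},g)$ as the only substantive hypothesis, entering through \autoref{codif}.
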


\section{A Bochner Technique} \label{sBochner}

From now on, let $ M $ be a connected closed oriented manifold, endowed with a transversely oriented harmonic Riemannian foliation $ (\mc{F}, g) $ of codimension $ q $.

\begin{definition}
A transverse vector field $ X \in \mf{trans}(\mc{F}) $ is \emph{transverse parallel} if \mbox{$ \nabla^T X = 0 $.}
\end{definition}

Beware that a transverse vector field $ X \in \mf{trans}(\mc{F}) $ which is parallel in the usual sense that $ \nabla X = 0 $ is also transverse parallel, but the converse is not true. By virtue of \autoref{dlem} and \autoref{cordef}, every transverse parallel vector field is basic harmonic.

\begin{lemma} \label{const}
Transverse parallel vector fields have constant length. Hence, they are uniquely determined by their value at one point.
\end{lemma}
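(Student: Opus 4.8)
The plan is to deduce both assertions from the single observation that the metric compatibility of $\nabla^T$ controls the derivative of the length function. Let $X \in \mf{trans}(\mc{F})$ be transverse parallel, so that $\nabla^T_Y X = 0$ for every $Y \in \Gamma_\ell(TM)$, and set $f := g_T(X,X)$, the squared length of $X$ (since $X \in \Gamma(E^\perp)$, the bundle-like property gives $g(X,X) = g_T(X,X)$, so this really is the length of $X$ as a vector field on $M$). First I would apply the compatibility of $\nabla^T$ with $g_T$ recorded in \autoref{bott}, using $Y$ as the differentiation direction and $X \in \Gamma_\ell(E^\perp)$ in both remaining slots, to obtain
\[ Y(f) = 2\, g_T(\nabla^T_Y X, X) = 0 \]
for all $Y \in \Gamma_\ell(TM)$. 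Hence $f$ is locally constant, and since $M$ is connected it is globally constant; this is precisely the constant-length claim.

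For the uniqueness statement, suppose $X, X' \in \mf{trans}(\mc{F})$ are both transverse parallel and agree at one point $x_0$. Since $\mf{trans}(\mc{F})$ is a vector space and $\nabla^T$ is linear, their difference $Z := X - X'$ again lies in $\mf{trans}(\mc{F})$ and satisfies $\nabla^T Z = 0$, i.e.~it is itself transverse parallel. By the first part, $g_T(Z,Z)$ is constant; but it vanishes at $x_0$, so it vanishes identically. Because $g_T$ restricts to a positive definite inner product on $E^\perp$ (its kernel being exactly $E$), this forces $Z = 0$, whence $X = X'$.

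There is no genuine obstacle here: the content is essentially the standard fact that parallel sections of a metric connection have constant norm and are determined by a single value, transplanted to the Bott connection $\nabla^T$. The only points requiring a little care are feeding the compatibility identity its correct argument slots (the direction ranges over all of $\Gamma_\ell(TM)$, while the two entries must be $E^\perp$-sections) and observing that the difference of two transverse parallel fields is again transverse parallel, which is immediate from linearity of $\nabla^T$ together with $\mf{trans}(\mc{F})$ being a vector space.
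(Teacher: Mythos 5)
Your proof is correct and follows essentially the same route as the paper: metric compatibility of $\nabla^T$ gives $Y(f)=0$ for all $Y \in \Gamma_\ell(TM)$, and connectedness of $M$ yields constancy. The paper leaves the uniqueness half implicit behind ``Hence''; your explicit argument (the difference of two transverse parallel fields is transverse parallel, has constant length, and vanishes at a point, hence everywhere) is exactly the intended one and uses no new ideas.
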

\begin{proof}
If $ X \in \mf{trans}(\mc{F}) $ is transverse parallel and $ f := \frac{1}{2} g(X,X) = \frac{1}{2} g_T(X,X) $, then for all $ Y \in \Gamma_\ell(TM) $:
\[ Y(f) = g_T(\nabla^T_Y X,X) = 0 . \]
Since $ M $ is connected, this implies that $ f $ is constant.
\end{proof}

For an endomorphism field $ A \in \Gamma(\End(TM)) $, we set
\[ |A|^2 := \tr (A \circ A^*) = \sum_i g\big(A(E_i), A(E_i)\big) \, , \]
where $ E_1, \ldots, E_n $ is a local orthonormal frame.

\begin{proposition} \label{harmeq}
Let $ X \in \mf{trans}(\mc{F}) $ be a basic harmonic vector field and consider the basic function $ f := \frac{1}{2} \, g_T(X,X) $. Then:
\begin{enumerate}
\item[a)] $ \nabla f = \nabla_X^T X $.
\item[b)] $ \Hess_T f (Y,Y) = g_T(\nabla_Y^T X, \nabla_Y^T X) + R^T(Y,X,X,Y) + g_T(\nabla^T_X \nabla^T_Y X,Y) - g_T(\nabla^T_{\nabla^T_X Y} X, Y) $ for all $ Y \in \Gamma_\ell(E^\perp) $.
\item[c)] $ \Delta_T f = |\nabla^T X|^2 + \Ric^T (X,X) $.
\end{enumerate}
\end{proposition}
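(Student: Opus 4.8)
The plan is to prove the three identities in order, using throughout the characterization of a basic harmonic vector field from \autoref{cordef}: that $ \nabla^T X $ is $ g_T $-symmetric and that $ \Div_T X = 0 $.

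For part a) I would evaluate $ g_T(\nabla f, Y) $ against an arbitrary $ Y \in \Gamma_\ell(E^\perp) $. Since $ f $ is basic, \autoref{grad} gives $ \nabla f \in \mf{trans}(\mc{F}) \subset \Gamma(E^\perp) $, and as $ g $ restricts to $ g_T $ on $ E^\perp $ we get $ g_T(\nabla f, Y) = Y(f) = g_T(\nabla^T_Y X, X) $ from the metric property of $ \nabla^T $. The $ g_T $-symmetry of $ \nabla^T X $ rewrites this as $ g_T(Y, \nabla^T_X X) $, and since $ g_T $ is non-degenerate on $ E^\perp $ with both $ \nabla f $ and $ \nabla^T_X X $ lying there, the claim $ \nabla f = \nabla^T_X X $ follows.

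For part b) I would start from $ \Hess_T f(Y,Y) = g_T(\nabla^T_Y \nabla f, Y) $, substitute $ \nabla f = \nabla^T_X X $ from part a), and insert the definition of $ R^T $ to obtain
\[ \nabla^T_Y \nabla^T_X X = R^T(Y,X)X + \nabla^T_X \nabla^T_Y X + \nabla^T_{[Y,X]} X \, . \]
The only remaining work is to rewrite $ g_T(\nabla^T_{[Y,X]} X, Y) $, and this is the step I expect to be the main obstacle, since it is where the foliated structure and basic harmonicity must be used together. Because $ X $ is transverse, $ \nabla^T_Z X = 0 $ for all $ Z \in \Gamma_\ell(E) $, so only the $ E^\perp $-component of $ [Y,X] $ survives; torsion-freeness of $ \nabla^T $ identifies this component with $ \nabla^T_Y X - \nabla^T_X Y $. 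The contribution of $ \nabla^T_X Y $ reproduces the term $ -g_T(\nabla^T_{\nabla^T_X Y} X, Y) $ verbatim, while the contribution of $ \nabla^T_Y X $, after one more application of the $ g_T $-symmetry of $ \nabla^T X $, becomes $ g_T(\nabla^T_Y X, \nabla^T_Y X) $. Assembling the pieces gives the stated formula.

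For part c) I would take the $ g $-trace of part b). As $ \iota_Z \Hess_T f = 0 $ and $ \nabla^T_Z X = 0 $ for $ Z \in \Gamma_\ell(E) $, it suffices to sum over a local orthonormal frame $ E_{p+1}, \ldots, E_n $ of $ E^\perp $; the first two summands then reproduce $ |\nabla^T X|^2 $ and $ \Ric^T(X,X) $ by definition. The two remaining summands combine into $ \tr(\nabla^T_X \nabla^T X) $ once one recognizes $ \nabla^T_X \nabla^T_{E_a} X - \nabla^T_{\nabla^T_X E_a} X $ as the covariant derivative of the endomorphism field $ \nabla^T X $ evaluated on $ E_a $. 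Since the trace commutes with the metric connection $ \nabla^T $, this equals $ X(\tr \nabla^T X) = X(\Div_T X) $, which vanishes by \autoref{dlem} and basic harmonicity, leaving precisely $ \Delta_T f = |\nabla^T X|^2 + \Ric^T(X,X) $.
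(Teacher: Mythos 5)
Your proposal is correct, and parts a) and b) follow the paper's own argument essentially verbatim: the gradient identity via the $g_T$-symmetry of $\nabla^T X$, the curvature substitution $\nabla^T_Y\nabla^T_X X = R^T(Y,X)X + \nabla^T_X\nabla^T_Y X + \nabla^T_{[Y,X]}X$, and the splitting of $\nabla^T_{[Y,X]}X$ via $[Y,X]^\perp = \nabla^T_Y X - \nabla^T_X Y$ (using that $X$ transverse kills the $E$-component) are exactly the steps in the paper. Where you genuinely diverge is part c): the paper fixes a point $x$ and invokes the existence (citing Kamber--Tondeur) of a special adapted orthonormal frame with $(\nabla^T E_i)_x = 0$ for the transverse directions, so that the fourth term of b) dies at $x$ and the third term collapses to $X(\Div_T X)$ pointwise. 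You instead recognize the last two terms invariantly as $\tr\bigl(\nabla^T_X(\nabla^T X)\bigr)$, where $\nabla^T X$ is viewed as an endomorphism field of $E^\perp$, and use that trace commutes with the induced covariant derivative to get $X(\tr \nabla^T X) = X(\Div_T X) = 0$. Both reductions are sound; your frame-free argument buys independence from the existence of transverse normal frames (no external citation needed), while the paper's pointwise frame computation is more elementary in that it needs no induced connection on $\End(E^\perp)$. One cosmetic remark: the identity $\tr(\nabla^T_X A) = X(\tr A)$ does not actually require metric compatibility of $\nabla^T$ — it holds for any connection, since contraction commutes with induced connections — so your parenthetical attribution to metric-ness is harmless but unnecessary.
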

\begin{proof}
\begin{enumerate}
\item[a)] By virtue of \autoref{symm}:
\[ g(\nabla f, Y) = Y(f) = g_T(\nabla^T_Y X, X) =  g_T(\nabla^T_X X, Y) = g(\nabla^T_X X, Y) \, , \quad Y \in \Gamma_\ell(TM) \, . \]
\item[b)] Part a) and \autoref{symm} imply that for all $ Y \in \Gamma_\ell(E^\perp) $:
\begin{align*}
\Hess_T f(Y,Y) &= g_T(\nabla^T_Y \nabla f, Y) = g_T(\nabla_Y^T \nabla^T_X X, Y) \\
&= R^T(Y,X,X,Y) + g_T(\nabla^T_X \nabla_Y^T X,Y) + g_T(\nabla^T_{[Y,X]} X,Y) \\
&= R^T(Y,X,X,Y) + g_T(\nabla^T_X \nabla_Y^T X,Y) + g_T(\nabla^T_{\nabla^T_Y X}X,Y) - g_T(\nabla^T_{\nabla_X^T Y}X,Y) \\
&= R^T(Y,X,X,Y) + g_T(\nabla^T_X \nabla_Y^T X,Y) + g_T(\nabla^T_Y X, \nabla^T_Y X) - g_T(\nabla^T_{\nabla_X^T Y}X,Y) \\
&= g_T(\nabla_Y^T X, \nabla_Y^T X) + R^T(Y,X,X,Y) + g_T(\nabla^T_X \nabla^T_Y X,Y) - g_T(\nabla^T_{\nabla^T_X Y} X, Y) \, .
\end{align*}
From the second to the third line we implicitly used that $ \nabla_{[Y,X]}^T X = \nabla_{[Y,X]^\perp}^T X $, since $ X \in \mf{trans}(\mc{F}) $.
\item[c)] If we sum b) over any local orthonormal frame, then the first two terms on the right-hand side yield $ |\nabla^T X|^2 $ and $ \Ric^T(X,X) $, as desired. \\
Fix a point $ x \in M $. As shown in \cite[Section 3]{KTT}, there exists a local orthonormal frame $ E_1, \ldots, E_n $ in a neighborhood of $ x $ such that $ E_1, \ldots, E_p \in \Gamma_\ell(E) $, $ E_{p+1}, \ldots, E_n \in \Gamma_\ell(E^\perp) $ and $ (\nabla^T E_i)_x = 0 $ for $ i = p+1, \ldots, n $. If we sum b) at $ x $ over such a frame, then the last term on the right-hand side vanishes and the third term reduces to
\[ \sum_i g_T(\nabla^T_X \nabla^T_{E_i} X, E_i) = \sum_i X\big(g_T(\nabla_{E_i}^T X, E_i)\big) = X( \Div_T X) = 0 \, . \]
\end{enumerate}
\end{proof}

We can now finally come to our main result:

\begin{theorem} \label{Bochner}
Let $ M $ be a connected closed oriented manifold, endowed with a transversely oriented harmonic Riemannian foliation $ (\mc{F}, g) $. If $ \Ric^T $ is positive semi-definite everywhere, then every basic harmonic vector field is transverse parallel. If additionally $ \Ric^T $ is positive definite at one point, then no non-trivial basic harmonic vector fields exist.
\end{theorem}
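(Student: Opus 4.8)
The plan is to run the classical Bochner argument through the transverse structure, using \autoref{harmeq} as the pointwise identity and the harmonicity of the foliation to integrate it. First I would take an arbitrary basic harmonic vector field $ X \in \mf{trans}(\mc{F}) $ and form the basic function $ f := \frac{1}{2} \, g_T(X,X) $, which is indeed basic by \autoref{grad} a). The heart of the matter is the Bochner-type identity in \autoref{harmeq} c), namely $ \Delta_T f = |\nabla^T X|^2 + \Ric^T(X,X) $; everything after this is an integration argument built around it.

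Next I would integrate this identity over $ M $ and argue that the left-hand side contributes nothing. By \autoref{grad} b) the gradient $ \nabla f $ is a transverse, hence foliate, vector field, and \autoref{dlem} gives $ \Delta_T f = \Div_T \nabla f $. Since $ (\mc{F},g) $ is harmonic, the transverse divergence theorem \autoref{div} then yields $ \int_M \Delta_T f \cdot \mu = 0 $. Integrating the identity therefore produces
\[ 0 = \int_M |\nabla^T X|^2 \cdot \mu + \int_M \Ric^T(X,X) \cdot \mu \, . \]
I expect this step to be the main obstacle—not because it is computationally hard, but because it is exactly where the harmonicity hypothesis enters. In the ordinary Bochner technique the analogous integral vanishes by Stokes' theorem, whereas here its vanishing rests on the non-trivial transverse divergence theorem; without harmonicity the term $ \int_M \Delta_T f \cdot \mu $ need not vanish and the whole argument collapses.

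Finally I would exploit the sign hypotheses. If $ \Ric^T $ is positive semi-definite everywhere, then both integrands above are pointwise non-negative, so each integral vanishes separately; in particular $ |\nabla^T X|^2 \equiv 0 $, which is precisely the statement that $ X $ is transverse parallel. For the strict case, $ \int_M \Ric^T(X,X) \cdot \mu = 0 $ together with non-negativity of the continuous integrand forces $ \Ric^T(X,X) \equiv 0 $ on $ M $; evaluating this at the point where $ \Ric^T $ is positive definite gives $ X = 0 $ there. Since $ X $ is already known to be transverse parallel, \autoref{const} upgrades this to $ X \equiv 0 $ on the connected manifold $ M $, completing the proof.
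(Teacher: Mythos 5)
Your proposal is correct and follows essentially the same route as the paper's proof: form $ f = \frac{1}{2}\,g_T(X,X) $, apply the Bochner identity of \autoref{harmeq}~c), integrate it to zero via \autoref{dlem} and the transverse divergence theorem (\autoref{div}), and conclude pointwise vanishing of $ |\nabla^T X|^2 $ and $ \Ric^T(X,X) $, finishing with \autoref{const} for the positive definite case. Your additional remarks correctly identify where harmonicity enters; no gaps.
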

\begin{proof}
Let $ X \in \mf{trans}(\mc{F}) $ be a basic harmonic vector field and $ f := \frac{1}{2} \, g_T(X,X) $. By virtue of \autoref{dlem}, \autoref{div} and \autoref{harmeq}:
\[ 0 = \int_M \Delta_T f \cdot \mu = \int_M \Big(|\nabla^T X|^2 + \Ric^T (X,X) \Big) \cdot \mu \geq \int_M |\nabla^T X|^2 \cdot \mu \geq 0 \, . \]
Therefore $ | \nabla^T X|^2 $ vanishes everywhere, meaning $ X $ is transverse parallel. Furthermore, also $ \Ric^T(X,X) $ vanishes everywhere, so if additionally $ \Ric^T $ is positive definite at one point, then $ X $ vanishes at that point. But then $ X $ vanishes everywhere by virtue of \autoref{const}.
\end{proof}

\begin{remark}
Note that \autoref{Bochner} is indeed a generalization of \autoref{posBoch}: If $ \mc{F} $ is the trivial foliation of $ M $ by singletons (i.e.~the corresponding integrable distribution $ E = 0 $), then transverse orientability of $ \mc{F} $ coincides with ordinary orientability of $ M $, the Riemannian foliation $ (\mc{F},g) $ is trivially harmonic and $ \Ric^T = \Ric $. Furthermore, basic harmonic and transverse parallel vector fields are nothing else than ordinary harmonic and parallel vector fields in this case.
\end{remark}

\begin{corollary}
If $ (M,\mc{F}, g) $ are as in \autoref{Bochner} and $ \Ric^T $ is positive semi-definite everywhere, then $ b_1(\mc{F}) \leq q = \codim \mc{F} $. If additionally $ \Ric^T $ is positive definite at one point, then $ b_1(\mc{F}) = 0 $.
\end{corollary}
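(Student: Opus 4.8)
The plan is to translate the statement about the basic Betti number $b_1(\mc{F})$ into a statement about basic harmonic vector fields, so that \autoref{Bochner} applies directly. By the basic Hodge theorem (\autoref{Hodge}), the natural map $\mc{H}_B^1(\mc{F}) \to H_B^1(\mc{F})$ is an isomorphism, so $b_1(\mc{F}) = \dim \mc{H}_B^1(\mc{F})$. Next I would invoke the $g$-dual correspondence $X \mapsto \omega_X$: as noted in the excerpt, this restricts to a linear isomorphism between $\mf{trans}(\mc{F})$ and $\Omega_B^1(\mc{F})$, and by \autoref{cordef} it carries the basic harmonic vector fields precisely onto the space $\mc{H}_B^1(\mc{F})$ of basic harmonic one-forms. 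Hence $b_1(\mc{F})$ equals the dimension of the vector space of basic harmonic vector fields.

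For the first assertion, assume $\Ric^T$ is positive semi-definite everywhere. Then \autoref{Bochner} says every basic harmonic vector field is transverse parallel, so the space of basic harmonic vector fields is contained in (in fact equals) the space of transverse parallel vector fields. To bound its dimension, I would fix a point $x_0 \in M$ and consider the evaluation map $X \mapsto X_{x_0} \in E^\perp_{x_0}$. By \autoref{const}, a transverse parallel vector field is uniquely determined by its value at a single point, so this evaluation map is injective; since $\dim E^\perp_{x_0} = q$, the space of transverse parallel vector fields has dimension at most $q$. Combining with the identification from the first paragraph yields $b_1(\mc{F}) \leq q = \codim \mc{F}$.

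For the second assertion, if $\Ric^T$ is in addition positive definite at one point, then the second half of \autoref{Bochner} guarantees that no non-trivial basic harmonic vector fields exist, whence $\mc{H}_B^1(\mc{F}) = 0$ and $b_1(\mc{F}) = 0$. I do not expect a genuine obstacle here, as all the analytic work is already packaged in the preceding results; the only point that warrants care is the dimension bound in the second paragraph, where the injectivity of the evaluation map — and hence the appeal to \autoref{const} — is what converts "every basic harmonic vector field is transverse parallel" into the quantitative estimate $b_1(\mc{F}) \leq q$.
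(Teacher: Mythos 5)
Your proposal is correct and follows essentially the same route as the paper: identify $b_1(\mc{F})$ with $\dim \mc{H}_B^1(\mc{F})$ via \autoref{Hodge}, pass to basic harmonic vector fields through the $g$-dual correspondence, apply \autoref{Bochner} to make them transverse parallel, and use \autoref{const} to show the evaluation map at a point is injective, giving the bound by $q$. The paper's own proof is just a compressed version of exactly this argument, including the same appeal to the second half of \autoref{Bochner} for the vanishing statement.
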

\begin{proof}
\autoref{Hodge} states that $ b_1(\mc{F}) = \dim \mc{H}_B^1(\mc{F}) $. Fix a point $ x \in M $ and consider the linear map $ \mc{H}_B^1(\mc{F}) \to T_xE^\perp , \, \omega_X \mapsto X_x $. By virtue of \autoref{const} and \autoref{Bochner}, this map is injective, meaning $ b_1(\mc{F}) \leq \dim T_x E^\perp = q $. If additionally $ \Ric^T $ is positive definite at one point, then \autoref{Bochner} even yields $ b_1(\mc{F}) = 0 $.
\end{proof}

We conclude this section by deriving \autoref{tflat}, for which we first need the following
\begin{definition}
\vspace{-.5cm} A transverse vector field $ X \in \mf{trans}(\mc{F}) $ is \emph{transverse Killing} if \mbox{$ \mc{L}_X g_T = 0 $.} We denote the vector space of all transverse Killing fields of $ (\mc{F},g) $ by $ \mf{iso}(\mc{F}) $.
\end{definition}

Again, a transverse vector field $ X \in \mf{trans}(\mc{F}) $ which is Killing in the usual sense that $ \mc{L}_X g = 0 $ is also transverse Killing, but the converse is not true. The same argument as in the proof of \autoref{symm} shows that $ X \in \mf{trans}(\mc{F}) $ is transverse Killing if and only if $ \nabla^T X $ is $ g_T $-skew-symmetric. This also demonstrates that every transverse parallel vector field is transverse Killing. Combining \Cref{tnegBoch,Bochner} yields the following

\begin{corollary} \label{tflat2}
If $ (M,\mc{F}, g) $ are as in \autoref{Bochner} and $ \Ric^T $ vanishes everywhere, then $ \dim \mf{iso}(\mc{F}) = b_1(\mc{F}) $.
\end{corollary}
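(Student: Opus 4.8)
The plan is to show that, when $\Ric^T \equiv 0$, the three a priori distinct vector spaces of transverse Killing, transverse parallel, and basic harmonic vector fields all coincide, and then to read off the dimension of the resulting common space via basic Hodge theory. First I would record the two elementary inclusions already noted in the text: every transverse parallel vector field is transverse Killing (since $\nabla^T X = 0$ is in particular $g_T$-skew-symmetric) and every transverse parallel vector field is basic harmonic (by \autoref{dlem} and \autoref{cordef}). This yields
\[ \{\text{transverse parallel}\} \subseteq \mf{iso}(\mc{F}) \quad\text{and}\quad \{\text{transverse parallel}\} \subseteq \{\text{basic harmonic}\} \, . \]

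Next I would invoke the two Bochner theorems for the reverse inclusions. Since $\Ric^T \equiv 0$ is simultaneously negative and positive semi-definite everywhere, \autoref{tnegBoch} gives that every transverse Killing field is transverse parallel, while \autoref{Bochner} gives that every basic harmonic field is transverse parallel. Pairing each of these with the corresponding elementary inclusion above produces the two equalities of vector spaces
\[ \mf{iso}(\mc{F}) = \{\text{transverse parallel}\} = \{\text{basic harmonic}\} \, . \]

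Finally I would translate the right-hand space into cohomology. The metric duality $X \mapsto \omega_X$ restricts to a linear isomorphism from the space of basic harmonic transverse vector fields onto $\mc{H}_B^1(\mc{F})$ (using $X \in \mf{trans}(\mc{F}) \Leftrightarrow \omega_X \in \Omega_B^1(\mc{F})$ together with \autoref{cordef}), and \autoref{Hodge} identifies $\mc{H}_B^1(\mc{F})$ with $H_B^1(\mc{F})$, whose dimension is $b_1(\mc{F})$ by definition. Chaining these identifications gives $\dim \mf{iso}(\mc{F}) = b_1(\mc{F})$.

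There is no serious analytic obstacle here: once \autoref{tnegBoch}, \autoref{Bochner} and \autoref{Hodge} are in hand, the argument is purely formal. The only point that demands care is the bookkeeping in this sandwich, namely ensuring that each of the two reverse inclusions coming from the Bochner theorems is matched with the correct elementary forward inclusion, so that one genuinely obtains equalities of vector spaces rather than a chain of inclusions that fails to close up.
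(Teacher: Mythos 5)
Your proposal is correct and takes essentially the same approach as the paper, which compresses the entire argument into the sentence preceding the corollary (``Combining \Cref{tnegBoch,Bochner} yields the following''): with $ \Ric^T \equiv 0 $, the two Bochner theorems collapse both the transverse Killing and the basic harmonic fields onto the transverse parallel ones, and the duality $ X \mapsto \omega_X $ together with \autoref{cordef} and \autoref{Hodge} gives $ \dim \mf{iso}(\mc{F}) = \dim \mc{H}_B^1(\mc{F}) = b_1(\mc{F}) $. Your write-up simply makes explicit the sandwich of inclusions and the bookkeeping that the paper leaves to the reader.
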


\section{Applications} \label{Sas}

We conclude this article by applying \autoref{tflat2} to two classes of spaces which naturally satisfy all of the required conditions, namely \emph{degenerate 3-$(\alpha,\delta)$-Sasaki} and certain \emph{Sasaki-$ \eta $-Einstein manifolds}, which arise for example as Boothby-Wang bundles over hyperkähler and Calabi-Yau manifolds, respectively. We only give minimal expositions of these geometries here and refer the interested reader to the comprehensive monograph \cite{BG} as well as the introductory articles \cite{AD} and \cite{ADS} about 3-$ (\alpha,\delta) $-Sasaki manifolds and the recent publication \cite{GRS} which focusses specifically on the degenerate case.

\begin{definition}
Let $ (M^{2n+1}, g, \xi, \eta,\varphi) $ be a Riemannian manifold, endowed with a unit length vector field $ \xi $, its $ g $-dual one-form $ \eta $ and an almost Hermitian structure $ \varphi $ on $ \ker \eta $. Then $ M $ is an \emph{almost contact metric manifold} if
\[ \varphi \,\xi = 0 \, , \qquad \varphi^2 = - \id + \xi \otimes \eta \, , \qquad g \circ (\varphi \times \varphi) = g - \eta \otimes \eta \, . \]
The \emph{Reeb vector field} $ \xi $ spans an integrable distribution $ E $ which defines the so-called \emph{characteristic foliation} $ \mc{F} $. The \emph{fundamental 2-form} is given by $ \Phi(X,Y) := g(X,\varphi Y) $ and $ M $ is a \emph{Sasaki manifold} if $ [\varphi,\varphi] + d\eta \otimes \xi = 0 $ as well as $ d\eta = 2 \Phi $. A Sasaki manifold is called \emph{$ \eta $-Einstein} if its Ricci curvature tensor satisfies $ \Ric = a g + b \eta \otimes \eta $ for some constants $a,b \in \R $.
\end{definition}

We can endow any Sasaki manifold with an orientation and its characteristic foliation with a compatible transverse orientation by using the volume forms $ (d\eta)^n \wedge \eta $ and $ (d\eta)^n$, respectively. The characteristic foliation of any Sasaki manifold harmonic and admits a transverse Kähler structure \cite{BG}. \\

In order to apply \autoref{tflat2} we limit ourselves to the case where the Kähler structure is Ricci-flat, i.e.~Calabi-Yau. In this case the Sasaki manifold is not Einstein in the ordinary sense that $ \Ric = a g $ but instead $ \eta $-Einstein with $ b \neq 0 $ \cite[Theorem 11.1.3]{BG}. Examples of Sasaki-$ \eta $-Einstein manifolds can be constructed via the famous \emph{Boothby-Wang bundle} \cite{BW}:

\begin{theorem}
Let $ N $ be a Calabi-Yau manifold with integral Kähler class. Then a certain $ S^1 $-bundle over $ N $ admits a Sasaki-$ \eta $-Einstein structure.
\end{theorem}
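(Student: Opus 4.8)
The plan is to realize $ M $ via the classical Boothby-Wang construction \cite{BW} and then to read off the $ \eta $-Einstein property from the transverse geometry. Write $ (g_N, J, \omega) $ for the Kähler structure on $ N^{2n} $. Since the Kähler class $ [\omega] $ is integral, it is the real image of the Euler class of a principal $ S^1 $-bundle $ \pi: M \to N $; Chern-Weil theory then furnishes a principal connection $ 1 $-form $ \eta $ on $ M $ whose curvature is a prescribed multiple of $ \pi^* \omega $, and after normalizing $ \omega $ (or passing to a suitable integral multiple of the Euler class) we may arrange $ d\eta = 2\pi^*\omega $. I would then define the candidate Sasaki structure by taking $ \xi $ to be the fundamental vector field of the $ S^1 $-action, so that $ \eta(\xi) = 1 $ and $ \iota_\xi d\eta = 0 $, setting $ g := \pi^* g_N + \eta \otimes \eta $, and letting $ \varphi $ be the horizontal lift of $ J $ with $ \varphi\,\xi = 0 $.

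The first block of verifications is the almost contact metric system $ \varphi\,\xi = 0 $, $ \varphi^2 = -\id + \xi \otimes \eta $ and $ g \circ (\varphi \times \varphi) = g - \eta \otimes \eta $. These are essentially formal: on the horizontal distribution $ \ker\eta \cong \pi^* TN $ they reduce to the corresponding Hermitian identities for $ (g_N, J) $, while on the vertical line spanned by $ \xi $ both sides vanish by construction. Likewise $ d\eta = 2\Phi $ is immediate, since the fundamental $ 2 $-form $ \Phi(X,Y) = g(X,\varphi Y) $ restricted to horizontal fields is the lift of $ \omega(X,Y) = g_N(X, JY) $, while both $ d\eta $ and $ \Phi $ annihilate $ \xi $.

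The substantive step is the normality condition $ [\varphi,\varphi] + d\eta \otimes \xi = 0 $, and I expect this to be the main obstacle: it is precisely where the integrability of $ J $ and the closedness and $ (1,1) $-type of $ \omega $ on the base enter, and where the bundle structure can no longer be ignored. Concretely, one evaluates the Nijenhuis torsion $ [\varphi,\varphi] $ on pairs of horizontal lifts and on mixed horizontal-vertical pairs. On two horizontal lifts the bracket differs from the lift of the base bracket by the vertical curvature term $ -d\eta(\cdot,\cdot)\,\xi = -2\pi^*\omega(\cdot,\cdot)\,\xi $; substituting this into $ [\varphi,\varphi] $ collapses the horizontal part to the (vanishing) Nijenhuis tensor of $ J $ plus a curvature correction, and it is the $ J $-invariance of $ \omega $ (its being of type $ (1,1) $) that makes this correction cancel exactly against $ d\eta \otimes \xi $. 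The mixed terms are controlled by $ \iota_\xi d\eta = 0 $ and the $ S^1 $-invariance of the horizontal lift of $ J $. This identifies $ M $ as a Sasaki manifold, so by the remark preceding the theorem its characteristic foliation $ \mc{F} $ is automatically harmonic and carries a transverse Kähler structure.

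Finally I would invoke \Cref{curv}: the leaves of $ \mc{F} $ are the $ S^1 $-fibres, and the associated local Riemannian submersions are modelled on $ N $ itself, so the transverse Kähler structure of $ \mc{F} $ is locally the Kähler structure of $ N $; in particular $ \Ric^T = \Ric^N = 0 $ because $ N $ is Calabi-Yau. A Ricci-flat transverse Kähler structure means the Sasaki manifold is transverse Calabi-Yau, whence \cite[Theorem 11.1.3]{BG} applies and gives $ \Ric = a\,g + b\,\eta \otimes \eta $ with $ a = -2 $ and $ b = 2(n+1) \neq 0 $, using the standard Sasaki identities $ \Ric(X,Y) = \Ric^T(X,Y) - 2g(X,Y) $ for $ X,Y \perp \xi $ and $ \Ric(\xi,\xi) = 2n $. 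Thus $ M $ is Sasaki-$ \eta $-Einstein, as claimed.
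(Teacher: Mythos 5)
Your proposal is correct and follows the same route the paper itself relies on: the paper states this theorem without proof, citing the classical Boothby--Wang construction \cite{BW} (together with \cite[Theorem 11.1.3]{BG} in the preceding paragraph), and your argument is a faithful reconstruction of exactly those ingredients --- connection form with curvature $2\pi^*\omega$, lifted metric and $\varphi$, the Hatakeyama-type normality check where integrability of $J$ and the $(1,1)$-type of $\omega$ enter, and the transverse-to-ambient Ricci identities giving $\Ric = -2g + 2(n+1)\,\eta\otimes\eta$. The only small imprecision is the suggestion of passing to an integral multiple of the Euler class to arrange $d\eta = 2\pi^*\omega$, which would change the bundle; the standard fix is instead to rescale the base metric $g_N$ (which preserves Ricci-flatness and the compatibility with $J$) or, equivalently, to apply a D-homothety to the resulting structure.
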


3-$ (\alpha, \delta) $-Sasaki geometry was devised by \textsc{Agricola} and \textsc{Dileo} as a common gene-ralization to accomodate both 3-Sasaki manifolds and other interesting examples like the quaternionic Heisenberg groups \cite{AD}. This new class of manifolds retains favorable properties like hypernormality and canonicity \cite[Theorem 2.2.1 \& Corollary 2.3.3]{AD}.

\begin{definition}
Let $ (M^{4n+3}, g, \xi_i, \eta_i,\varphi_i)_{i=1,2,3} $ be a Riemannian manifold, endowed with three almost contact metric structures. Then $ M $ is an \emph{almost 3-contact metric manifold} if their interrelation is governed by
\begin{gather*}
\varphi_i \, \xi_j=\xi_k \, ,\qquad \eta_i\circ\varphi_j=\eta_k\, , \qquad \varphi_i \circ \varphi_j=\varphi_k+\xi_i\otimes\eta_j
\end{gather*}
for any even permutation $ (i,j,k) $ of $ (1,2,3) $. The three \emph{Reeb vector fields} $ \xi_i $ span an integrable distribution $ E $ which defines the so-called \emph{characteristic foliation $\mc{F} $}. The \emph{fundamental 2-forms} are given by $ \Phi_i(X,Y) := g(X, \varphi_iY) $ and $ M $ is a \emph{3-$(\alpha,\delta)$-Sasaki manifold} if there exist $ \alpha, \delta \in \R $, $ \alpha \neq 0 $, such that
\[ d\eta_i = 2\alpha \Phi_i + 2 (\alpha-\delta) \eta_j \wedge \eta_k \]
for any even permutation $ (i,j,k) $ of $ (1,2,3) $. Finally, $ M $ is called \emph{positive} if $ \alpha \delta > 0 $, \emph{negative} if $ \alpha \delta < 0 $ and \emph{degenerate} if $ \delta = 0 $.
\end{definition}

Again, we can endow any 3-$(\alpha,\delta) $-Sasaki manifold with an orientation and its characteristic foliation with a compatible transverse orientation using $ (d\eta_1)^{2n} \wedge \eta_1 \wedge \eta_2 \wedge \eta_3 $ and $ (d\eta_1)^{2n}$, respectively. Furthermore, the characteristic foliation is harmonic \cite[Corollary 2.3.1]{AD} and we can easily control the sign of the transverse Ricci curvature tensor, cf.~\autoref{curv} \cite[Theorem 2.2.1]{ADS}:

\begin{theorem}
The characteristic foliation of a 3-$(\alpha, \delta) $-Sasaki manifold induces local Riemannian submersions onto a quaternionic Kähler manifold whose Ricci curvature is positive/negative/zero if $ M $ is positive/negative/degenerate.
\end{theorem}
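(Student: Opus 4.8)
The plan is to work locally through the leaf space, identify the transverse structure as quaternionic Kähler, and then read off the sign of its Ricci curvature directly from the defining structure equations. First I would confirm that $ \mc{F} $ is a Riemannian foliation in the sense of \autoref{curv}: the three Reeb fields $ \xi_i $ are Killing and satisfy the bracket relations $ [\xi_i,\xi_j] = 2\delta\,\xi_k $ for even permutations $ (i,j,k) $ (which one checks by evaluating $ d\eta_i(\xi_j,\xi_k) $ against the defining equation), so they span the integrable distribution $ E $ and their flows act by transverse isometries, i.e.\ $ \mc{L}_{\xi_i}g_T = 0 $. This furnishes local Riemannian submersions $ \phi\colon U \to N $ with $ \dim N = 4n $ and settles the first assertion.

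Next I would descend the structure tensors. Each $ \varphi_i $ is skew-adjoint and preserves $ E^\perp $ (because it preserves $ E = \langle \xi_1,\xi_2,\xi_3\rangle $), so $ \varphi_i|_{E^\perp} $ pushes forward to an almost complex structure $ I_i $ on $ N $. On horizontal vectors every $ \eta_i $ vanishes, so the almost $ 3 $-contact relations $ \varphi_i^2 = -\id + \xi_i\otimes\eta_i $ and $ \varphi_i\varphi_j = \varphi_k + \xi_i\otimes\eta_j $ collapse to the quaternionic identities $ I_i^2 = -\id $ and $ I_iI_j = I_k $; hence $ N $ carries an almost quaternion-Hermitian structure compatible with $ g_N $. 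To upgrade this to quaternionic Kähler (for $ n\geq 2 $) I would compute $ \nabla^T_X\varphi_i $ for $ X\in\Gamma(E^\perp) $ via the Koszul formula of \autoref{bott}, feeding in $ d\eta_i = 2\alpha\Phi_i + 2(\alpha-\delta)\eta_j\wedge\eta_k $. Since the cross-terms $ \eta_j\wedge\eta_k $ annihilate horizontal vectors, $ \nabla^T\varphi_i $ should land in the span of $ \varphi_j $ and $ \varphi_k $, which upon descent says precisely that the rank-three subbundle $ \langle I_1,I_2,I_3\rangle\subset\End(TN) $ is $ \nabla^{g_N} $-parallel — the defining property of a quaternionic Kähler manifold.

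Finally I would compute the transverse Ricci curvature. By \autoref{curv} it suffices to evaluate $ \Ric^T $ on $ E^\perp $ from $ R^T $, since this mirrors $ \Ric^N $. Inserting the structure equations into $ R^T $ and contracting, the outcome should be Einstein, $ \Ric^N = c\,g_N $ (as it must be for a quaternionic Kähler manifold), with the Einstein constant $ c $ a positive multiple of $ \alpha\delta $: the $ \alpha^2 $-contribution from the $ 2\alpha\Phi_i $ piece and the contribution from the $ 2(\alpha-\delta)\eta_j\wedge\eta_k $ piece should combine into a net factor $ \alpha^2 - \alpha(\alpha-\delta) = \alpha\delta $. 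This yields the trichotomy at once: $ c>0 $, $ c<0 $ or $ c=0 $ according as $ \alpha\delta>0 $, $ \alpha\delta<0 $ or $ \delta=0 $, i.e.\ $ M $ positive, negative or degenerate.

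The hard part will be the middle and final steps simultaneously — proving that the horizontal structure is genuinely quaternionic Kähler while pinning down the curvature normalization. Everything hinges on careful bookkeeping of how the $ 2\alpha\Phi_i $ and $ 2(\alpha-\delta)\eta_j\wedge\eta_k $ pieces of $ d\eta_i $ propagate through $ \nabla^T $ and $ R^T $, and on cleanly separating the purely horizontal curvature (which must obey the quaternionic Kähler identities and supply $ c $) from the mixed terms involving the Reeb directions. I would also flag the low-dimensional case $ n=1 $, where $ \dim N = 4 $ and ``quaternionic Kähler'' must be read as self-dual Einstein, since in four dimensions the parallel-subbundle condition is automatic.
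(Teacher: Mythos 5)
First, a point of comparison: the paper does not prove this theorem at all — it is quoted from the literature (\cite[Theorem 2.2.1]{ADS}, with the underlying structure theory from \cite{AD}), so your attempt has to be judged on its own merits. Its skeleton (Killing Reeb fields and the bracket relations $[\xi_i,\xi_j]=2\delta\xi_k$ give a Riemannian foliation; descend the quaternionic data; compute the transverse Ricci curvature) is the right one, but two of the three steps have genuine gaps.

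The central error is in your descent step. An individual tensor $\varphi_i|_{E^\perp}$ pushes forward through the local submersion only if it is invariant under the flows of \emph{all three} Reeb fields, and this fails whenever $\delta\neq 0$: from the structure equation one computes $\mc{L}_{\xi_i}\eta_j=\iota_{\xi_i}d\eta_j = 2\alpha\,\iota_{\xi_i}\Phi_j-2(\alpha-\delta)\eta_k = 2\delta\,\eta_k$, and correspondingly the flow of $\xi_i$ rotates $\varphi_j$ and $\varphi_k$ into one another. Only the rank-three subbundle $\langle\varphi_1,\varphi_2,\varphi_3\rangle\subset\End(E^\perp)$ is invariant and descends — which is precisely why the base is quaternionic K\"ahler rather than hyperk\"ahler. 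As written, your argument would produce three global parallel complex structures $I_1,I_2,I_3$ on $N$, i.e.\ a hyperk\"ahler (hence Ricci-flat) base, contradicting the very trichotomy you are trying to prove; it is correct only in the degenerate case $\delta=0$, consistent with \cite[Theorem 3.1]{GRS}. The fix is to phrase both the descent and the parallelism statement ($\nabla^T\varphi_i\in\langle\varphi_j,\varphi_k\rangle$) for the bundle, and that parallelism is the crux of the proof, not a routine verification.

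The curvature step is also not a derivation but numerology. The term $2(\alpha-\delta)\eta_j\wedge\eta_k$ annihilates horizontal vectors, so it cannot enter $\Ric^T$ by any direct contraction of the kind you sketch; the $\delta$-dependence enters through the Ricci curvature of the total space $M$, which must first be computed from the 3-$(\alpha,\delta)$-Sasaki structure equations (in \cite{ADS} this is done via the canonical connection with skew torsion). The actual bookkeeping is: for unit horizontal $X$ one has $\Ric_M(X,X)=2\alpha\bigl(2\delta(n+2)-3\alpha\bigr)$, while the O'Neill correction of the submersion — with $A_XY=\tfrac12[X,Y]^v$ determined by $d\eta_i(X,Y)=2\alpha\Phi_i(X,Y)$ — contributes $+6\alpha^2$, so the $\alpha^2$-terms cancel and $\Ric^N=4(n+2)\,\alpha\delta\; g_N$. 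Your identity ``$\alpha^2-\alpha(\alpha-\delta)=\alpha\delta$'' lands on the right answer but does not correspond to any step of this computation. (Your remark about $n=1$, where quaternionic K\"ahler must be read as self-dual Einstein, is a correct and necessary caveat.)
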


Interesting examples of \emph{degenerate} 3-$(\alpha,\delta)$-Sasaki manifolds can be constructed as \linebreak \emph{3-Boothby-Wang bundles} over hyperkähler manifolds \cite[Theorem 3.1]{GRS}:

\begin{theorem} \label{BW}
Let $ N $ be a hyperkähler manifold with integral Kähler classes. Then a certain $ T^3 $-bundle over $ N $ admits a degenerate 3-$(\alpha,\delta) $-Sasaki structure.
\end{theorem}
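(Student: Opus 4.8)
The plan is to construct the bundle and its structure explicitly by a threefold Boothby--Wang procedure and then to read off the defining equation. Write $g_N$ for the hyperkähler metric on $N^{4n}$, let $I_1,I_2,I_3$ be its complex structures with $I_1I_2=I_3$ (and cyclically), and let $\omega_i:=g_N(\cdot,I_i\cdot)$ be the corresponding Kähler forms. First I would use the integrality of the Kähler classes $[\omega_1],[\omega_2],[\omega_3]$: for a suitable common constant $\alpha\neq0$ (e.g.\ $\alpha=\pi$, or after rescaling $g_N$) each $2\alpha\omega_i$ represents $2\pi$ times an integral class, so by the classical Boothby--Wang / prequantization construction there is a principal $S^1$-bundle over $N$ carrying a connection whose curvature is $2\alpha\omega_i$. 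Taking the fibre product of these three circle bundles yields a principal $T^3$-bundle $\pi:M\to N$ with connection one-forms $\eta_1,\eta_2,\eta_3$ satisfying $d\eta_i=2\alpha\,\pi^\ast\omega_i$; note that the $d\eta_i$ are basic (horizontal), since $T^3$ is abelian.

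Next I would equip $M$ with the remaining data. Let $\xi_1,\xi_2,\xi_3$ be the fundamental vector fields of the three circle actions, so that $\eta_i(\xi_j)=\delta_{ij}$ and $[\xi_i,\xi_j]=0$, and set $g:=\pi^\ast g_N+\sum_i\eta_i\otimes\eta_i$, which makes $\pi$ a Riemannian submersion with flat, totally geodesic $T^3$-fibres and renders $\xi_1,\xi_2,\xi_3$ an orthonormal vertical frame. I would define the endomorphisms $\varphi_i$ by lifting $I_i$ horizontally and prescribing $\varphi_i\xi_i=0$, $\varphi_i\xi_j=\xi_k$, $\varphi_i\xi_k=-\xi_j$ on the vertical part, for each even permutation $(i,j,k)$. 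One then checks that $(g,\xi_i,\eta_i,\varphi_i)_{i=1,2,3}$ is an almost 3-contact metric structure: the single almost-contact-metric axioms ($\varphi_i\xi_i=0$, $\varphi_i^2=-\id+\xi_i\otimes\eta_i$, and compatibility with $g$) and the interrelations ($\varphi_i\xi_j=\xi_k$, $\eta_i\circ\varphi_j=\eta_k$, $\varphi_i\varphi_j=\varphi_k+\xi_i\otimes\eta_j$) all reduce on the horizontal distribution to the quaternion relations for $I_1,I_2,I_3$ and follow on the vertical frame directly from the defining formulas.

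The heart of the argument is the identity
\[ \Phi_i+\eta_j\wedge\eta_k=\pi^\ast\omega_i \,, \]
which I would establish by evaluating both sides on the three types of pairs arising from the splitting $TM=H\oplus V$. On horizontal vectors $\Phi_i$ restricts to $\pi^\ast\omega_i$ while $\eta_j\wedge\eta_k$ vanishes; on the vertical frame one computes $\Phi_i(\xi_j,\xi_k)=g(\xi_j,\varphi_i\xi_k)=-1$ and $(\eta_j\wedge\eta_k)(\xi_j,\xi_k)=1$, so the two terms cancel, matching $\pi^\ast\omega_i=0$ there; all mixed terms vanish in each of the three expressions. Multiplying by $2\alpha$ and inserting $d\eta_i=2\alpha\,\pi^\ast\omega_i$ gives
\[ d\eta_i=2\alpha\,\Phi_i+2\alpha\,\eta_j\wedge\eta_k \,, \]
which is precisely the 3-$(\alpha,\delta)$-Sasaki equation with $2(\alpha-\delta)=2\alpha$, i.e.\ with $\delta=0$; hence $M$ is degenerate, as claimed.

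I expect the main obstacle to be bookkeeping rather than conceptual: ensuring that one constant $\alpha$ serves all three curvatures simultaneously (which relies on the hyperkähler symmetry putting $\omega_1,\omega_2,\omega_3$ on equal footing, together with the normalisation freedom in $g_N$ and in the connections) and carrying out the verification of the full list of almost-3-contact relations without sign errors. The genuinely informative step is the cancellation in the displayed identity on the vertical frame: it is exactly the mutual annihilation of the $\Phi_i$ and $\eta_j\wedge\eta_k$ contributions there that forces $\delta=0$, and thereby distinguishes the degenerate 3-Boothby--Wang construction from the positive (3-Sasaki-type) one, where the vertical brackets $[\xi_j,\xi_k]$ are non-trivial.
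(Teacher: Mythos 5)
Your proposal is correct: the fibre product of the three prequantization circle bundles, the metric $\pi^\ast g_N+\sum_i\eta_i\otimes\eta_i$, the lifts $\varphi_i$ of $I_i$, and the key identity $\Phi_i+\eta_j\wedge\eta_k=\pi^\ast\omega_i$ forcing $2(\alpha-\delta)=2\alpha$, i.e.\ $\delta=0$, together constitute a complete argument. The paper itself gives no proof of this statement, quoting it from \cite[Theorem 3.1]{GRS}, and your construction is essentially the 3-Boothby--Wang construction carried out there, so this is the same approach.
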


As outlined above, \autoref{tflat2} can be applied to both 3-$ (\alpha,\delta) $-Sasaki manifolds and Sasaki-$ \eta $-Einstein manifolds with transverse Calabi-Yau structure:

\begin{corollary} \label{corapp}
Let $ M $ be a connected closed degenerate 3-$(\alpha, \delta) $-Sasaki manifold or Sasaki-$ \eta $-Einstein manifold with transverse Calabi-Yau structure. Then the characteristic foliation $ \mc{F} $ satisfies $\dim \mf{iso}(\mc{F}) = b_1(\mc{F}) $.
\end{corollary}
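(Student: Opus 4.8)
The plan is to recognize \autoref{corapp} as an immediate application of \autoref{tflat2}: in each of the two cases the characteristic foliation $ \mc{F} $ lives on a connected closed oriented manifold and is a transversely oriented harmonic Riemannian foliation whose transverse Ricci curvature vanishes identically, whence $ \dim \mf{iso}(\mc{F}) = b_1(\mc{F}) $ follows verbatim. Thus the entire task is to verify the four hypotheses of \autoref{tflat2}---connectedness and closedness, orientability of $ M $, transverse orientability together with harmonicity of $ \mc{F} $, and $ \Ric^T \equiv 0 $---in both geometric settings.

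First I would treat the degenerate 3-$(\alpha,\delta)$-Sasaki case. Connectedness and closedness are assumed. Orientability of $ M $ and transverse orientability of $ \mc{F} $ follow from the explicit global volume forms $ (d\eta_1)^{2n} \wedge \eta_1 \wedge \eta_2 \wedge \eta_3 $ on $ M $ and $ (d\eta_1)^{2n} $ on $ E^\perp $ recorded above. Harmonicity of the characteristic foliation is exactly \cite[Corollary 2.3.1]{AD}. Finally, the structural theorem states that $ \mc{F} $ induces local Riemannian submersions onto a quaternionic Kähler model $ N $ whose Ricci curvature is zero precisely because $ M $ is degenerate, i.e.~$ \delta = 0 $; by the identity $ \Ric^T(X,X) = \Ric^N(\phi_\ast X, \phi_\ast X) \circ \phi $ of \autoref{curv} this forces $ \Ric^T \equiv 0 $.

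The Sasaki-$\eta$-Einstein case is entirely parallel, now using the characteristic foliation of the single Reeb field $ \xi $. Here orientability and transverse orientability come from the volume forms $ (d\eta)^n \wedge \eta $ and $ (d\eta)^n $, harmonicity of the characteristic foliation holds for every Sasaki manifold, and the hypothesis of a \emph{transverse Calabi-Yau} structure means precisely that the transverse Kähler metric is Ricci-flat; hence $ \Ric^N = 0 $ and \autoref{curv} again yields $ \Ric^T \equiv 0 $. In both settings \autoref{tflat2} then applies directly and gives the claim.

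I do not anticipate a genuine obstacle: the content is purely the assembly of already-cited structural facts about these geometries. The only point requiring a moment's care is the passage from the vanishing of the base Ricci curvature to the vanishing of $ \Ric^T $, which is handled by the model-space comparison of \autoref{curv}. One should merely ensure that it is the degeneracy condition $ \delta = 0 $ (respectively the transverse Calabi-Yau hypothesis) that produces the Ricci-flat local model, and note that for \autoref{tflat2} the vanishing of the transverse Ricci curvature alone suffices---no stronger transverse flatness is needed.
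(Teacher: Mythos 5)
Your proposal is correct and matches the paper's (implicit) argument exactly: the paper prefaces \autoref{corapp} with ``as outlined above,'' meaning precisely the verification you carry out---orientability and transverse orientability via the volume forms $(d\eta_1)^{2n}\wedge\eta_1\wedge\eta_2\wedge\eta_3$ resp.\ $(d\eta)^n\wedge\eta$, harmonicity from the cited structural results, and $\Ric^T \equiv 0$ from the Ricci-flat local model (degeneracy $\delta=0$, resp.\ the transverse Calabi-Yau hypothesis) combined with \autoref{curv}, after which \autoref{tflat2} applies. No discrepancies to report.
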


In this context, one is usually not so much interested in transverse Killing fields per se, but rather in the following notion:

\begin{definition}
An \emph{automorphism of a Sasaki manifold} $ M $ is an isometry which satisfies one of the equivalent conditions $ \phi_\ast \xi = \xi $, $ \phi^* \eta = \eta $ or $ \phi_\ast \circ \varphi = \varphi \circ \phi_\ast $. The collection of all such automorphisms constitutes a Lie group, which we denote by $ \Aut(M) $. The Lie algebra $ \mf{aut}(M) $ of $ \Aut(M) $ is comprised of all complete Killing vector fields $ X $ which satisfy $ \mc{L}_X \xi = 0 $, $ \mc{L}_X \eta = 0 $ and $ \mc{L}_X \varphi = 0 $. \\
An \emph{automorphism of a 3-$(\alpha,\delta)$-Sasaki manifold} $ M $ is an isometry $ \phi:M \to M $ which satisfies one of the equivalent conditions $ \phi_\ast \xi_i = \xi_i $, $ \phi^* \eta_i = \eta_i $ or $ \phi_\ast \circ \varphi_i = \varphi_i \circ \phi_\ast $ for $ i=1,2,3 $. The collection of all such automorphisms constitutes a Lie group, which we denote by $ \Aut(M) $. The Lie algebra $ \mf{aut}(M) $ of $ \Aut(M) $ is comprised of all complete Killing vector fields $ X $ which satisfy $ \mc{L}_X \xi_i = 0 $, $ \mc{L}_X \eta_i = 0 $ and $ \mc{L}_X \varphi_i = 0 $ for $ i = 1,2,3 $.
\end{definition}

From now on, let $ M $ be a connected closed degenerate 3-$(\alpha, \delta) $-Sasaki manifold or Sasaki-$ \eta $-Einstein manifold with transverse Calabi-Yau structure, let $ E $ be the integrable distribution spanned by the Reeb vector field(s) and $ \mc{F} $ the characteristic foliation.

\begin{lemma}
The orthogonal projection of any infinitesimal automorphism to $ E^{\perp} $ is a transverse Killing field.
\end{lemma}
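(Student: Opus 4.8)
The plan is to decompose the infinitesimal automorphism $ X $ along $ TM = E \oplus E^\perp $, writing $ X = X^\top + X^\perp $ with $ X^\top \in \Gamma(E) $ and $ X^\perp \in \Gamma(E^\perp) $, and then to verify the two defining properties of a transverse Killing field for $ X^\perp $: that $ X^\perp $ lies in $ \mf{trans}(\mc{F}) $, and that $ \mc{L}_{X^\perp} g_T = 0 $. In both cases I would derive the property for $ X^\perp $ by first establishing it for $ X $ and then observing that the leafwise part $ X^\top $ contributes nothing.

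First I would show $ X^\perp \in \mf{trans}(\mc{F}) = \mf{fol}(\mc{F}) \cap \Gamma(E^\perp) $. The containment in $ \Gamma(E^\perp) $ is automatic. For the foliate property, recall that an infinitesimal automorphism satisfies $ \mc{L}_X \xi_i = [X,\xi_i] = 0 $ for each Reeb field (and likewise $ [X,\xi]=0 $ in the Sasaki case). Since the Reeb fields pointwise span $ E $, it follows that $ [X, \Gamma(E)] \subset \Gamma(E) $, i.e.~$ X \in \mf{fol}(\mc{F}) $. As $ E $ is integrable we have $ \Gamma(E) \subset \mf{fol}(\mc{F}) $, so in particular $ X^\top \in \mf{fol}(\mc{F}) $; since $ \mf{fol}(\mc{F}) $ is a vector space, $ X^\perp = X - X^\top \in \mf{fol}(\mc{F}) $ as well, and hence $ X^\perp \in \mf{trans}(\mc{F}) $.

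Next I would verify $ \mc{L}_{X^\perp} g_T = 0 $. The key structural input is the explicit expression $ g_T = g - \sum_i \eta_i \otimes \eta_i $ (respectively $ g_T = g - \eta \otimes \eta $), valid because the Reeb fields are $ g $-orthonormal, so that the orthogonal projection to $ E^\perp $ is $ Y \mapsto Y - \sum_i \eta_i(Y)\xi_i $. From this and the Leibniz rule $ \mc{L}_X(\eta_i \otimes \eta_i) = (\mc{L}_X \eta_i)\otimes \eta_i + \eta_i \otimes (\mc{L}_X \eta_i) $ I obtain $ \mc{L}_X g_T = \mc{L}_X g - \sum_i \big((\mc{L}_X\eta_i)\otimes\eta_i + \eta_i\otimes(\mc{L}_X\eta_i)\big) = 0 $, because $ X $ is Killing ($ \mc{L}_X g = 0 $) and an automorphism ($ \mc{L}_X\eta_i = 0 $). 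On the other hand, $ X^\top $ is a global section of $ E $, so by the defining property $ \mc{L}_W g_T = 0 $ of a transverse metric (applied with $ W = X^\top $) we have $ \mc{L}_{X^\top} g_T = 0 $. Subtracting yields $ \mc{L}_{X^\perp} g_T = \mc{L}_X g_T - \mc{L}_{X^\top} g_T = 0 $, which together with the previous paragraph shows that $ X^\perp $ is a transverse Killing field.

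I expect the only genuinely structural step to be the passage to the explicit formula for $ g_T $ together with the orthonormality of the Reeb fields; the remainder is bookkeeping with Lie brackets and Lie derivatives. The subtlest conceptual point is the verification that $ X $ is foliate, which rests precisely on the Reeb fields being $ X $-invariant and spanning $ E $: without the automorphism condition $ \mc{L}_X\xi_i=0 $ one could not split off the leafwise component while remaining inside $ \mf{fol}(\mc{F}) $.
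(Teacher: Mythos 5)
Your proof is correct, and its first half (deducing $ X \in \mf{fol}(\mc{F}) $ from $ \mc{L}_X\xi_i = 0 $ and the Reeb fields spanning $ E $, then $ X^\perp = X - X^\top \in \mf{trans}(\mc{F}) $) coincides with the paper's argument. For the identity $ \mc{L}_{X^\perp}g_T = 0 $, however, you take a genuinely different route. The paper never writes $ g_T $ explicitly: it argues by cases, using foliateness of $ X^\perp $ and $ \ker g_T = E $ to kill $ (\mc{L}_{X^\perp}g_T)(Y,Z) $ when $ Y $ or $ Z $ lies in $ \Gamma_\ell(E) $, and, for $ Y,Z \in \Gamma_\ell(E^\perp) $, replacing $ g_T $ by $ g $ and invoking only the Killing condition $ \mc{L}_X g = 0 $. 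That argument uses nothing about the contact structure beyond foliateness, so it actually proves the more general statement that the $ E^\perp $-projection of any foliate Killing field of a bundle-like metric is transverse Killing. You instead exploit the specific geometry: since the Reeb fields are $ g $-orthonormal, $ g_T = g - \sum_i \eta_i\otimes\eta_i $, and then $ \mc{L}_X g_T = 0 $ follows in one line from $ \mc{L}_X g = 0 $ together with the automorphism condition $ \mc{L}_X\eta_i = 0 $; subtracting $ \mc{L}_{X^\top}g_T = 0 $ (the transverse metric condition applied to $ X^\top \in \Gamma(E) $) and using additivity of $ V \mapsto \mc{L}_V $ gives the claim as a single global tensor identity, with no case analysis. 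Your route is shorter and cleaner in this setting, at the price of invoking an extra hypothesis ($ \mc{L}_X\eta_i = 0 $) and the mutual orthogonality of the Reeb fields, which is standard (it follows from skew-symmetry of $ \varphi_i $ and $ \varphi_i\xi_j = \xi_k $, and the paper itself uses $ \eta_j(\xi_i)=\delta_{ij} $ in the next lemma) but which you assert rather than derive; the paper's route is the one that generalizes beyond (3-)Sasaki geometry.
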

\begin{proof}
Let $ X \in \mf{aut}(M) $ be an infinitesimal automorphism and let $ X^\top $, $ X^\perp $ denote its orthogonal projections to $ E $, $ E^\perp $, respectively. Because $ X $ commutes with the Reeb vector field(s), it follows that $ X \in \mf{fol}(\mc{F}) $. Since $ X^\top $ is trivially foliate, we obtain that $ X^\perp = X - X^\top \in \mf{fol}(\mc{F}) $. This implies that $ (\mc{L}_{X^\perp}g_T)(Y,Z) = 0 $ if $ Y $ or $ Z $ lies in $ \Gamma_\ell(E) $. Furthermore $ \mc{L}_X g = 0 $, since $ X $ is Killing and $ \mc{L}_{X^\top} g_T = 0 $ because $ g_T $ is a transverse metric. Hence, if both $ Y,Z \in \Gamma_\ell(E^\perp) $:
\begin{align*}
(\mc{L}_{X^\perp} g_T) (Y,Z) &= (\mc{L}_X g_T)(Y,Z) \\
&= X\big(g_T(Y,Z)\big) - g_T([X,Y],Z) - g_T(Y,[X,Z]) \\
&= X\big(g(Y,Z)\big) - g([X,Y],Z) - g(Y,[X,Z]) \\
&= (\mc{L}_X g)(Y,Z) = 0 \, .
\end{align*}
\end{proof}

Therefore $ \pi: \mf{aut}(M) \to \mf{iso}(\mc{F}) , \, X \mapsto X^\perp $ is a well-defined linear map. We can determine the kernel of $ \pi $ using an argument from \cite[Lemma 4.2]{GRS}:

\begin{lemma}
The kernel of $ \pi $ is comprised of all the linear combinations of the Reeb vector field(s) with constant coefficients.
\end{lemma}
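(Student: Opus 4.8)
The plan is to establish the two inclusions, the routine one being $\supseteq$ and the substantive one being $\subseteq$. For $\supseteq$, a constant-coefficient combination $\sum_i c_i \xi_i$ of Reeb fields is tangent to $E$, so its orthogonal projection to $E^\perp$ vanishes; since each $\xi_i$ is an infinitesimal automorphism (in the degenerate case the Reeb fields even commute, $[\xi_i,\xi_j] = 2\delta\xi_k = 0$, and in the Sasaki case $\xi$ is an automorphism by standard theory), such a field lies in $\mf{aut}(M)$ and hence in $\ker \pi$.

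For the non-trivial inclusion $\subseteq$, let $X \in \ker \pi$. Then $X^\perp = 0$, i.e.\ $X \in \Gamma(E)$. Since the Reeb fields form a global orthonormal frame of $E$, with $g(\xi_i,\xi_j) = \eta_i(\xi_j) = \delta_{ij}$, I would write $X = \sum_i f_i \xi_i$ where $f_i := \eta_i(X)$ is smooth (a single summand in the Sasaki-$\eta$-Einstein case), and aim to show that every $f_i$ is constant. Here I would only invoke the condition $\mc{L}_X \eta_i = 0$, which is part of being an automorphism. By Cartan's formula,
\[ 0 = \mc{L}_X \eta_i = \iota_X d\eta_i + d(\iota_X \eta_i) = \iota_X d\eta_i + df_i \, , \]
so that $df_i = - \iota_X d\eta_i = - \sum_m f_m \, \iota_{\xi_m} d\eta_i$.

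The heart of the matter is to show that each contraction $\iota_{\xi_m} d\eta_i$ vanishes in the degenerate (respectively Sasaki) case. Plugging in the structure equation $d\eta_i = 2\alpha \Phi_i + 2(\alpha-\delta)\, \eta_j \wedge \eta_k$, I would treat the two summands separately: from $\Phi_i(Y,Z) = g(Y,\varphi_i Z)$ and the skew-adjointness of $\varphi_i$ one gets $\iota_{\xi_m}\Phi_i = -g(\varphi_i\xi_m, \cdot)$, and the identities $\varphi_i\xi_i = 0$, $\varphi_i\xi_j = \xi_k$, $\varphi_i\xi_k = -\xi_j$ (derived from the three-contact relations together with $\varphi_i^2 = -\id + \xi_i \otimes \eta_i$) turn this into $0$ or $\pm\eta_\bullet$, while $\iota_{\xi_m}(\eta_j\wedge\eta_k) = \delta_{mj}\eta_k - \delta_{mk}\eta_j$. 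When $\delta = 0$ these two contributions cancel for every $m$, giving $\iota_{\xi_m} d\eta_i = 0$; in the Sasaki case $d\eta = 2\Phi$ and $\iota_\xi\Phi = 0$ directly. Hence $df_i = 0$, and connectedness of $M$ forces each $f_i$ to be constant.

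I expect the bookkeeping of the contractions $\iota_{\xi_m} d\eta_i$ to be the main obstacle, both in correctly deriving the auxiliary identities for $\varphi_i\xi_m$ and in verifying the cancellation. It is precisely here that degeneracy is essential: for general $\delta$ one finds $\iota_{\xi_j} d\eta_i = -2\delta\,\eta_k \neq 0$, so the argument — and indeed the conclusion — genuinely relies on $\delta = 0$ (or the Sasaki case).
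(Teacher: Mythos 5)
Your proof is correct and follows essentially the same route as the paper: write $X = \sum_i f_i \xi_i$ (resp.\ $X = f\xi$) with smooth coefficients and use $\mc{L}_X \eta_i = 0$ together with $\eta_i(\xi_j) = \delta_{ij}$ to conclude that each $df_i = 0$, whence the coefficients are constant by connectedness. The only difference is that the paper invokes $\mc{L}_{\xi_m}\eta_i = 0$ as a known fact about the degenerate (resp.\ Sasaki) case, whereas you verify the equivalent statement $\iota_{\xi_m} d\eta_i = 0$ directly from the structure equations — a worthwhile addition, since as you correctly note this is exactly where $\delta = 0$ enters and where the conclusion would fail otherwise.
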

\begin{proof}
Clearly, all linear combinations of the Reeb vector field(s) with constant coefficients lie in the kernel of $ \pi $. Conversely, let $ X \in \ker \pi = \mf{aut}(M) \cap \Gamma(E) $. Then $ X = f \xi $ or $ X = \sum_{i=1}^3 f_i \xi_i $ with $ f,f_1,f_2,f_3 \in C^\infty(M) $, respectively. For all $ Y \in \Gamma_\ell(TM) $, we have
\[ 0 = \big(\mathcal{L}_X \eta\big)(Y) = f \underbrace{(\mathcal{L}_{\xi} \eta)}_{=0}(Y) + Y(f) \underbrace{\eta(\xi)}_{=1} = Y(f) \, , \]
respectively
\[ 0 = \big(\mathcal{L}_X \eta_j\big)(Y) = \sum_{i=1}^3 \Big(f_i \underbrace{(\mathcal{L}_{\xi_i} \eta_j)}_{=0}(Y) + Y(f_i) \underbrace{\eta_j(\xi_i)}_{=\delta_{ij}}\Big) = Y(f_j) \, . \]
Since $ M $ is connected, it follows that $ f,f_1,f_2,f_3 $ have to be constant.
\end{proof}

Hence, the rank-nullity theorem and \autoref{corapp} yield:
\[ \dim \mf{aut}(M) = \dim \im \pi + \dim \ker \pi \leq \dim \mf{iso}(\mc{F}) + \rk E = b_1(\mc{F}) + \rk E \, . \]
We have thus arrived at our final two theorems:

\begin{theorem}
Let $ M $ be a connected closed degenerate 3-$(\alpha, \delta) $-Sasaki manifold with characteristic foliation $ \mc{F} $. Then the dimension of the automorphism group $ \Aut(M) $ is at most $ b_1(\mc{F}) + 3 $. \\
In particular, if $ M $ arises via \cite[Theorem 3.1]{GRS} as a $ T^3 $-bundle over a compact hyperkähler manifold $ N $ with integral Kähler classes, then $ \dim \Aut(M) \leq b_1(N) + 3 $.
\end{theorem}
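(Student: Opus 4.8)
The plan is to read off both claims from the inequality
\[ \dim \mf{aut}(M) \leq b_1(\mc{F}) + \rk E \]
established immediately before the statement, combined with the fact that $ \dim \Aut(M) = \dim \mf{aut}(M) $. For the first claim I would simply note that a degenerate 3-$(\alpha,\delta)$-Sasaki manifold carries three Reeb vector fields $ \xi_1, \xi_2, \xi_3 $ spanning the integrable distribution $ E $, so $ \rk E = 3 $. Since $ \mf{aut}(M) $ is by definition the Lie algebra of the Lie group $ \Aut(M) $, the inequality yields $ \dim \Aut(M) = \dim \mf{aut}(M) \leq b_1(\mc{F}) + 3 $ at once. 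This amounts to little more than bookkeeping on top of the rank-nullity argument already carried out.

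For the ``in particular'' statement, the task reduces to showing that $ b_1(\mc{F}) = b_1(N) $ whenever $ M $ is the $ T^3 $-bundle $ \varpi: M \to N $ furnished by \autoref{BW}. First I would observe that in this construction the three Reeb fields are tangent to and span the $ T^3 $-fibers, so $ E $ coincides with the vertical bundle of $ \varpi $ and the characteristic foliation $ \mc{F} $ is precisely the foliation by the (connected) fibers of $ \varpi $. The point is then that for such a simple foliation the complex of basic forms is isomorphic, via pullback, to the de Rham complex of the base.

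Concretely, I would argue in a local trivialization $ U \times T^3 $ with fiber coordinates $ t_i $: a form is basic exactly when it involves no $ dt_i $ and has coefficients independent of the $ t_i $, which is precisely the condition that it be $ \varpi^\ast $ of a form on $ U $. These local descriptions glue by naturality of $ \varpi^\ast $, and since $ d $ commutes with $ \varpi^\ast $ the map $ \varpi^\ast: (\Omega(N), d) \to (\Omega_B(\mc{F}), d_B) $ is an isomorphism of cochain complexes. Hence $ H_B^\bullet(\mc{F}) \cong H^\bullet(N) $, so in particular $ b_1(\mc{F}) = b_1(N) $, and substituting into the first claim gives $ \dim \Aut(M) \leq b_1(N) + 3 $.

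The main obstacle is this final identification $ b_1(\mc{F}) = b_1(N) $: one must confirm both that the characteristic foliation really is the fiber foliation of the bundle and that its basic cohomology computes the cohomology of the base. Everything else follows immediately from the preceding lemmas, so once the basic-form/base-form dictionary is in place the theorem follows with no further computation.
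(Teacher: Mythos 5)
Your proposal is correct and takes essentially the same route as the paper: the first claim is read off, exactly as in the paper, from the rank--nullity inequality $\dim\mf{aut}(M)\leq\dim\mf{iso}(\mc{F})+\rk E=b_1(\mc{F})+\rk E$ together with $\rk E=3$ and the identification of $\mf{aut}(M)$ as the Lie algebra of $\Aut(M)$. For the ``in particular'' part the paper offers no explicit argument, and your identification of $\mc{F}$ with the fiber foliation of $\varpi:M\to N$ and of $\Omega_B(\mc{F})$ with $\varpi^\ast\Omega(N)$ (hence $b_1(\mc{F})=b_1(N)$) is precisely the standard argument the paper leaves implicit, so you have in fact supplied a detail the paper omits.
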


\begin{theorem}
Let $ M $ be a connected closed Sasaki-$ \eta $-Einstein manifold with transverse Calabi-Yau structure and characteristic foliation $ \mc{F} $. Then the dimension of the automorphism group $ \Aut(M) $ is at most $ b_1(\mc{F}) + 1 $. \\
In particular, if $ M $ arises as the Boothby-Wang bundle over a compact Calabi-Yau mani- fold $ N $ with integral Kähler class, then $ \dim \Aut(M) \leq b_1(N) + 1 $.
\end{theorem}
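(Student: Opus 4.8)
The plan is to specialize the rank-nullity inequality established just before the statement to the Sasaki case. A Sasaki manifold carries a single Reeb vector field $ \xi $, which spans the integrable distribution $ E $ defining the characteristic foliation, so $ \rk E = 1 $. Feeding this into the displayed chain $ \dim \mf{aut}(M) \leq \dim \mf{iso}(\mc{F}) + \rk E = b_1(\mc{F}) + \rk E $ (whose final equality is \autoref{corapp}) immediately gives $ \dim \mf{aut}(M) \leq b_1(\mc{F}) + 1 $. Since $ \mf{aut}(M) $ is by definition the Lie algebra of $ \Aut(M) $, the two have equal dimension, and the first assertion $ \dim \Aut(M) \leq b_1(\mc{F}) + 1 $ follows.

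For the Boothby-Wang statement I would reduce the problem to the claim $ b_1(\mc{F}) = b_1(N) $. Here $ M $ is a principal $ S^1 $-bundle $ \pi: M \to N $ whose fundamental vector field is precisely the Reeb field $ \xi $, so the leaves of $ \mc{F} $ are exactly the circle fibers. A form on $ M $ is then basic for $ \mc{F} $ if and only if it is $ S^1 $-invariant and horizontal, which is exactly the condition for it to descend along $ \pi $; hence $ \pi^* $ identifies the de Rham complex $ \big(\Omega(N), d\big) $ with the basic complex $ \big(\Omega_B(\mc{F}), d_B\big) $. As $ \pi^* $ commutes with the exterior derivative, it induces an isomorphism $ H_B^\bullet(\mc{F}) \cong H^\bullet(N) $, so in particular $ b_1(\mc{F}) = b_1(N) $, and combining with the first part yields $ \dim \Aut(M) \leq b_1(N) + 1 $.

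The only step that is not a direct substitution is the identification $ b_1(\mc{F}) = b_1(N) $, and it is there that I would be most careful: one must use the concrete Boothby-Wang structure — that the $ S^1 $-action is principal with $ \xi $ as its fundamental field — rather than the abstract Sasaki-$ \eta $-Einstein hypothesis, in order to conclude that basic forms are exactly pullbacks from the base. Everything else in the argument is formal, being a verbatim specialization ($ \rk E = 1 $) of the $ 3 $-$(\alpha,\delta) $-Sasaki computation that precedes it.
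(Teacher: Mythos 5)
Your proposal is correct and follows the paper's own route exactly: the paper derives the bound by specializing the displayed rank–nullity inequality $\dim\mf{aut}(M) \leq \dim\mf{iso}(\mc{F}) + \rk E = b_1(\mc{F}) + \rk E$ to the Sasaki case $\rk E = 1$. Your identification $b_1(\mc{F}) = b_1(N)$ via the fact that basic forms are precisely the invariant horizontal forms, i.e.\ pullbacks along the principal $S^1$-bundle projection, is the standard argument that the paper leaves implicit for the Boothby--Wang case, and you have filled it in correctly.
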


\begin{remark}
One might ask if there is even equality $ \dim \Aut(M) = b_1(\mc{F}) + \rk E $ in the above theorems. This is equivalent to the question if $ \pi : \mf{aut}(M) \to \mf{iso}(\mc{F}) $ is surjective or if every transverse Killing field can be extended to an infinitesimal automorphism. In the context of Sasaki manifolds, this problem was further rephrased in \cite[Theorem 8.1.8]{BG}, where they obtain that a transverse Killing field $ X \in \mf{iso}(\mc{F}) $ extends to an infinitesimal automorphism if and only if the basic cohomology class $ [\iota_X d\eta] \in H_B^1(\mc{F}) $ vanishes. The same holds for degenerate 3-$(\alpha,\delta) $-Sasaki manifolds if the classes $ [\iota_X d\eta_i] $ vanish for \mbox{$ i =1,2,3 $.} In the special case $ b_1(\mc{F}) = 0 $ this leads to alternative proofs of the above theorems.
\end{remark}

\printbibliography

\vspace{1cm}

\textsc{Leon Roschig, Philipps-Universität Marburg, Fachbereich Mathematik und Informatik, Hans-Meerwein-Straße 6, 35043 Marburg} \\
\textit{E-mail address}: \texttt{roschig@mathematik.uni-marburg.de}

\end{document}